\documentclass[12pt, reqno]{amsart}
\usepackage{amssymb,amsthm,amsmath}
\usepackage[numbers,sort&compress]{natbib}
\usepackage{amsfonts}
\usepackage{mathrsfs}
\usepackage{latexsym}
\usepackage{tikz}
\usepackage{tkz-euclide}
\usepackage{float}
\usepackage{pdfsync}
\usepackage{indentfirst}
\date{today}

\usepackage[colorlinks=true, linkcolor=blue, citecolor=red]{hyperref}

\newcommand{\R}{\mathbb R}

\newtheorem*{proof*}{Proof}
\newcommand{\beq}{\begin{equation}}
	\newcommand{\eeq}{\end{equation}}
\newcommand{\ben}{\begin{eqnarray}}
	\newcommand{\een}{\end{eqnarray}}
\newcommand{\beno}{\begin{eqnarray*}}
	\newcommand{\eeno}{\end{eqnarray*}}
\numberwithin{equation}{section} 
\newtheorem{Thm}{Theorem}[section] 
\newtheorem{Lem}[Thm]{Lemma} 

\newtheorem{Prop}[Thm]{Proposition}
\newtheorem{Rem}[Thm]{Remark}

\numberwithin{equation}{section}

\allowdisplaybreaks

\begin{document}
	\title[Critical mass threshold for the 2D PKSNS system]{\bf  Critical mass threshold for the 2D Patlak-Keller-Segel-Navier-Stokes system}
	
	\author{Wendong~Wang}
	\address[Wendong~Wang]{School of Mathematical Sciences, Dalian University of Technology, Dalian, 116024,  China}
	\email{wendong@dlut.edu.cn}

\author{Dongyi Wei}
	\address[Dongyi Wei]{School of  Mathematical Sciences and LMAM, Peking University, Beijing 100871, China }
	\email{jnwdyi@pku.edu.cn}

	\author{Zhifei Zhang }
	\address[Zhifei Zhang]{School of  Mathematical Sciences and LMAM, Peking University, Beijing 100871, China }
	\email{zfzhang@math.pku.edu.cn}

	\date{\today}
	\maketitle

	\begin{abstract}In this paper, we investigate critical mass threshold for the  Patlak-Keller-Segel-Navier-Stokes system on the two-dimensional whole space and obtain global existence of strong solutions if the initial mass  is less than or equal to $8\pi$, regardless of the initial norm of the velocity. One new observation is that the local mass of the density function rearrangement satisfies a good inequality that is independent of velocity; and then an improved maximum principle is applied by choosing a nice auxiliary function.
	\end{abstract}
	{\small {\bf Keywords:}  Patlak-Keller-Segel-Navier-Stokes; critical mass threshold; global existence; maximum principle; blow-up  }

	{\bf 2010 Mathematics Subject Classification:} 92C17, 35Q30, 35A01, 76D05, 35B44.
	
	\setcounter{equation}{0}

	\section{Introduction}
	
	In this paper, we investigate  the critical mass threshold of the following two-dimensional parabolic-elliptic Patlak-Keller-Segel (PKS) system coupled with Navier-Stokes (NS) equations in $ \mathbb{R}^{2}\times(0,T)$:
\begin{equation}\label{ksns}
	\left\{
	\begin{array}{lr}
		\partial_tn+u\cdot\nabla n=\triangle n-\nabla\cdot(n\nabla c), \\
		\triangle c+n=0, \\
		\partial_tu+u\cdot\nabla u+\nabla \pi=\triangle u+n\nabla \phi, \\
		\nabla\cdot u=0,
	\end{array}
	\right.
\end{equation}
along with initial conditions
$$(n,u)\big|_{t=0}=(n_{\rm in},v_{\rm in}),$$
where $n$ represents the cell density, $c$ denotes the chemoattractant density, and $u$ denotes the velocity of fluid. In addition, $\pi$ is the pressure and $\phi$ is the given potential function.

When $u=0$, $\pi=0$ and $\phi=0$, the system (\ref{ksns}) is reduced to the following classical parabolic-elliptic PKS system:
\begin{equation}\label{eq:pks}
	\left\{
	\begin{array}{lr}
		\partial_tn=\triangle n-\nabla\cdot(n\nabla c), \\
		\triangle c+n=0.
	\end{array}
	\right.
\end{equation}
The  system, jointly developed by Patlak \cite{Patlak1}, Keller and Segel \cite{Keller1},  serves as a classical mathematical framework for modeling aggregation behavior driven by both random motion and chemotaxis. There has been much progress in the study of the well-posedness of this system (\ref{eq:pks})  in $\mathbb{R}^d$, and we will briefly list some of it.

It is well known that solutions to the one-dimensional PKS system are globally well-posed. However, in spatial dimensions $d \ge 2$, solutions to the classical PKS system (\ref{eq:pks}) may develop singularities in finite time. In the two-dimensional setting, provided the integrability condition $(1 + |x|^2 + |\ln n|)n \in L^\infty([0, T), L^1(\mathbb{R}^2))$ holds, the solution satisfies the following free energy inequality:
\[
F[n(0)] \ge F[n(t)] + \int_{0}^{t} \int_{\mathbb{R}^2} n(x, s)|\nabla \ln n(x, s) - \nabla c(x, s)|^2 dx ds,
\]
for a.e. $t \in (0, T)$, where the free energy functional $F(n)$ is defined by
\[
F[n] = \int_{\mathbb{R}^2} n \ln n dx - \frac{1}{2} \int_{\mathbb{R}^2} n c dx, \quad c(x, s) = -\frac{1}{2\pi} \int_{\mathbb{R}^2} \ln |x - y|n(y,s)dy.
\]
This free energy functional was originally introduced in the context of chemotaxis models by Nagai, Senba, and Yoshida \cite{NSY1997}.
Blanchet, Dolbeault, and Perthame \cite{BDP2006} established the free energy inequality and identified a critical mass threshold $M_c = 8\pi$, where they proved that free energy solutions of \eqref{eq:pks} exist globally for initial data satisfying $n_{in} \in L^1_+(\mathbb{R}^2, (1+|x|^2)dx)$ and $n_{in} \log n_{in} \in L^1(\mathbb{R}^2)$, provided the total mass $M := \|n_{in}\|_{L^1} < 8\pi$. These results were previously announced in \cite{DP2004} and their proof relies on the free energy inequality combined with the logarithmic Hardy-Littlewood-Sobolev inequality. Another alternative proof, based on ideas from \cite{D1991}, can be found in \cite{CCLW2012}. 
Global existence for the critical mass case, $M = 8\pi$, was established by Blanchet, Carrillo, and Masmoudi \cite{BCM2008} (see also \cite{BKLN2006} for the radially symmetric case). Conversely, in the super-critical regime $M > 8\pi$, solutions are known to blow up in finite time. This is a consequence of the virial identity:
\begin{equation}\label{eq:blow-up}
\frac{d}{dt} \int_{\mathbb{R}^2} |x|^2 n(x, t)dx = 4M \left( 1 - \frac{M}{8\pi} \right); 
\end{equation}
see, for example, \cite{DS2009} or \cite{BDP2006}.
In the aforementioned works, the finite second moment assumption $|x|^2 n_{in} \in L^1(\mathbb{R}^2)$ plays a crucial role. Recently, Wei \cite{Wei2018} established global well-posedness for $M \leq 8\pi$ by assuming only $n_{in} \in L^1(\mathbb{R}^2)$, thereby removing the extra moment constraints. The similar results also hold for the parabolic-parabolic form of (\ref{eq:pks}) ($\triangle c$ is replaced by $\triangle c-\partial_t c$), which was obtained by  Hosono in \cite{H2026} with the help of a reconstructed Lyapunov functional.
For further results on this topic, we refer the reader to \cite{Calvez1,BDDM2023,CGMN2022,Schweyer1} and references therein. Finally, in spatial dimensions $d \ge 3$, finite-time blow-up may occur for arbitrarily small initial mass (see \cite{Na2000, SW2019, winkler1,DDDMW,TW2017} and related works).

It's interesting that whether Wei's result can be extended to the PKS system coupling with Navier-Stokes equations \eqref{ksns}. Gong and He in \cite{GH2021} considered the case of $\phi\equiv c$ and proved that classical solutions exist for any finite time by assuming that the initial mass $M<8\pi$ and $n_{in}(1+|x|^2)\in L^1(\mathbb{R}^2)$. Recently,  Lai, Wei and Zhou in \cite{LWZ2023} improved this result and obtained the critical mass threshold $M\leq 8\pi$ when $\phi\equiv c$, that is the solutions are global if the initial mass $\int_{\mathbb{R}^2}n_{in}dx\leq 8\pi$ along with $n_{in}\log(n_{in}), n_{in}\log(1+|x|)\in L^1(\mathbb{R}^2)$ by using a good cancellation of $n\nabla c.$ It's still an open question whether the same conclusion holds for general the potential function $\phi.$ Here we answer this  question.

 Assume that $\phi\in L^\infty\left([0,\infty); \dot{W}^{1,\infty}(\mathbb{R}^{2})\right)$, and  we say that $$0\leq n \in C_w([0, T), L^1(\mathbb{R}^2))\bigcap L^\infty_{loc}((0, T), L^\infty(\mathbb{R}^2))$$
 along with $u\in L^\infty([0, T), H^1(\mathbb{R}^2))$ is a strong solution to (\ref{ksns}) with the initial data $0\leq n_{in}\in L^1(\mathbb{R}^{2})$ and $u_{in}\in W^{1,2}(\mathbb{R}^{2})$ if \\
 (i) for all test functions $\psi \in \mathcal{D}(\mathbb{R}^2)$, there holds
\begin{equation*}
\begin{aligned}
\frac{d}{dt} \int_{\mathbb{R}^2} \psi(x)n(x, t)dx = & \int_{\mathbb{R}^2} \Delta\psi(x)n(x, t)dx + \int_{\mathbb{R}^2}u\cdot \nabla\psi(x)n(x, t)dx \\
& - \frac{1}{4\pi} \int_{\mathbb{R}^2} \int_{\mathbb{R}^2} [\nabla\psi(x) - \nabla\psi(y)] \cdot \frac{x - y}{|x - y|^2} n(x, t)n(y,t)dx dy;
\end{aligned} 
\end{equation*}
(ii) $\nabla \cdot u = 0$ is true in the sense of distributions; \\
(iii) for all divergence-free test functions $\varphi \in C_\sigma^\infty(\mathbb{R}^2; \mathbb{R}^2)$ (the space of smooth, compactly supported, solenoidal vector fields), the following equality holds:
\begin{equation*}
\begin{aligned}
\frac{d}{dt} \int_{\mathbb{R}^2} u(x, t) \cdot \varphi(x) \, dx = & \,  \int_{\mathbb{R}^2} u(x, t) \cdot \Delta \varphi(x) \, dx \\
& + \int_{\mathbb{R}^2} (u(x, t) \otimes u(x, t)) : \nabla \varphi(x) \, dx+\int_{\mathbb{R}^2} n\nabla\phi \cdot \varphi(x) \, dx,
\end{aligned} 
\end{equation*}
in the sense of distributions on $(0, T)$.
Here $C_w([0, T), L^1(\mathbb{R}^2))$ is a subspace of $L^\infty([0, T), L^1(\mathbb{R}^2))$ such that $t \mapsto \int_{\mathbb{R}^2} \psi(x) n(x,t)dx$ is continuous for any $\psi \in \mathcal{D}(\mathbb{R}^2)$.


our main result is stated as follows.
		\begin{Thm}
			\label{thm:main} Assume that $\phi\in L^\infty\left([0,\infty); \dot{W}^{1,\infty}(\mathbb{R}^{2})\right)$, $0\leq n_{in}\in L^1\cap L^{\infty}(\mathbb{R}^{2})$ and $u_{in}\in W^{1,2}(\mathbb{R}^{2})$. Then there exists a global strong 
			 solution $(n, u, \pi)$ to the Keller-Segel-Navier-Stokes equations \eqref{ksns} with the initial data $(n_{in},u_{in})$ provided that the initial mass $\int_{\mathbb{R}^{2}} n_{in}dx\leq 8\pi.$
		\end{Thm}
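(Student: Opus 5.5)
We will follow Wei's rearrangement approach \cite{Wei2018}, modified so that the velocity drops out. The plan has four steps. First, local well-posedness gives a strong solution on some interval $[0,T^*)$, together with a blow-up criterion: if $T^*<\infty$, then $\|n(t)\|_{L^\infty}\to\infty$. Indeed, once $n$ is bounded, $\nabla c$ is bounded by the 2D Biot--Savart-type kernel, and the forcing $n\nabla\phi$ lies in $L^2$, since $n\in L^1\cap L^\infty$ and $\nabla\phi\in L^\infty$. Standard 2D Navier--Stokes energy and $H^1$ estimates then keep $u\in L^\infty H^1$, and the solution can be continued. So the task reduces to an a priori $L^\infty$ bound on $n$ on any finite time interval, valid when $M\le 8\pi$.

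Second, we introduce the distribution function of $n$. Let
\[
\mu(s,t)=|\{x: n(x,t)>s\}|
\]
and define the local mass of the rearrangement
\[
m(r,t)=\int_{\{n>n^*(r,t)\}} n\,dx,
\]
that is, the mass carried by the level set of area $r$. (Equivalently, $m(r,t)=\int_0^r n^*(\rho,t)\,d\rho$, where $n^*$ is the decreasing rearrangement.) The key observation is that $u$ is divergence free, so the transport term does not change the measure of any level set. Concretely, for $\Omega_s=\{n>s\}$ we have
\[
\int_{\Omega_s}\nabla\cdot(un)\,dx=\int_{\partial\Omega_s} s\,u\cdot\nu=s\int_{\Omega_s}\nabla\cdot u=0 .
\]
Hence the evolution of $m$ is governed only by diffusion and chemotaxis. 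Arguing as in \cite{Wei2018}, we use the coarea formula, the isoperimetric inequality $|\partial\Omega_s|^2\ge 4\pi|\Omega_s|$, and the identity $\int_{\Omega_s}\nabla\cdot(n\nabla c)=-\int_{\Omega_s} n^2+\int_{\partial\Omega_s}\dots$. We aim to derive the velocity-independent differential inequality
\[
\partial_t m\le 4\pi r\,\partial_r^2 m+\partial_r m\,(m-?)\ldots,\qquad \text{more precisely}\quad \partial_t m\le 4\pi r\,\partial_{rr}m+m\,\partial_r m ,
\]
in the viscosity/weak sense. Here $m(0,t)=0$, $m(\infty,t)=M$, $m$ is concave in $r$, and $\partial_r m(0,t)=\|n(t)\|_{L^\infty}$. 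This inequality is exactly the one that holds for radial PKS. Mass conservation ($M$ is preserved because $u$ is divergence free) gives $m\le M\le 8\pi$.

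Third, we apply a comparison or maximum principle to this scalar inequality. For $M<8\pi$ it suffices to construct a supersolution of the form $\bar m(r,t)=\frac{Ar}{1+Br}$-type functions, or better the steady states $\frac{8\pi\lambda r}{1+\lambda r}$. These satisfy
\[
4\pi r\,\partial_{rr}\bar m+\bar m\,\partial_r\bar m=0 .
\]
We then show that $m\le \bar m$ is preserved, which bounds $\partial_r m(0,t)$, i.e. $\|n\|_\infty$. The main obstacle is the critical case $M=8\pi$ without moment assumptions. There, steady states with fixed $\lambda$ cannot dominate $m$ near $r=\infty$, since both tend to $8\pi$. Following Wei's idea, I would choose an auxiliary function with a time-dependent, slowly decreasing $\lambda(t)$, of the form
\[
\bar m=8\pi\frac{\lambda r}{1+\lambda r}+\text{correction}.
\]
The correction is meant to absorb the tail $M-m(r,t)$, which tends to $0$ as $r\to\infty$ but only at a rate that is not quantified. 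We would prove an improved maximum principle: the first touching point, if any, must occur at finite $r$, because the tail mass is nonincreasing in an averaged sense. At such a point the strict supersolution property gives a contradiction. The initial ordering is ensured by $n_{in}\in L^\infty$ together with $m(r,0)<M$ for every $r$. Verifying that this auxiliary function works with the variable-coefficient inequality is where I expect the delicate computation.

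Fourth, the $L^\infty$ bound on $[0,T]$ for every $T$ gives $u\in L^\infty_{loc}H^1$ and $n\in L^\infty_{loc}L^\infty$. By the blow-up criterion $T^*=\infty$, and weak continuity in $L^1$ follows from the equation. This yields the global strong solution claimed in Theorem \ref{thm:main}.
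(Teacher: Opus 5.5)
Your reduction to an a priori $L^\infty$ bound on $n$ matches the paper. So does the velocity-free inequality $\partial_t m\le 4\pi r\,\partial_{rr}m+m\,\partial_r m$ (your $m$ is the paper's $k$, and this is Proposition \ref{eq:k's equation}), where the drift drops out because $\int_{\{n>s\}}\nabla\cdot(un)\,dx=0$. The comparison with the steady states $8\pi\lambda r/(1+\lambda r)$ for $M<8\pi$ also matches.

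\textbf{The gap is the critical case $M=8\pi$.} It is part of the statement, and your proposal does not carry it out. The ``correction'' and the touching-point argument are never specified, and the ingredients you do name point the wrong way.
\begin{itemize}
\item The claim $m(r,0)<M$ for every $r$ fails whenever $|\{n_{in}>0\}|<\infty$, since then $m(r,0)=M$ for all large $r$. So no comparison function tending to $8\pi$ from below can lie above $m(\cdot,0)$ on the whole half-line.
\item A concentration parameter $\lambda(t)$ that decreases in time would give a time-uniform bound on $\|n(t)\|_{L^\infty}=\partial_r m(0,t)$. That cannot hold at critical mass: already for $u=0$ with finite second moment, Blanchet--Carrillo--Masmoudi show concentration to a Dirac mass as $t\to\infty$. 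The bound must deteriorate with $T$; the paper's is $8\|n_{in}\|_{L^\infty}(1+4T\|n_{in}\|_{L^\infty})$.
\item A tail mass $M-m(r,t)$ that is nonincreasing in $t$ only gives a lower bound on $m$. The comparison needs an upper bound.
\end{itemize}

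\textbf{The missing idea} is a quantitative estimate $m(s_0,t)<8\pi$ at one fixed finite point $s_0$, uniform for $t\in[0,T]$. After that, the steady-state comparison is done on the bounded interval $(0,s_0)$ with a Dirichlet condition at $s_0$, so $s=\infty$ never enters. This is why Proposition \ref{lem:dn1995} allows the alternative $f(\Lambda,t)\le g(\Lambda,t)$. The paper gets the estimate as follows.
\begin{itemize}
\item Test the inequality against $\phi(s,t)=(2T-t)^{-2}e^{-s/(4\pi(2T-t))}$, which solves the adjoint equation $\partial_t\phi+\partial_s^2(4\pi s\phi)=0$. Integrating by parts gives $\frac{d}{dt}\int_0^\infty k\phi\,ds\le\frac{1}{8\pi(2T-t)}\int_0^\infty k^2\phi\,ds$.
\item Use $k\le M=8\pi$; this is exactly where criticality enters. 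It shows that $R(t)=(2T-t)\int_0^\infty k\phi\,ds$ is nonincreasing.
\item Since $k$ is nondecreasing in $s$, $R(t)\ge 4\pi e^{-s_0/(4\pi T)}k(s_0,t)$. Meanwhile $R(0)$ is computed explicitly from $k(s,0)\le\min\{\|n_{in}\|_{L^\infty}s,8\pi\}$.
\item Set $a=1/(\|n_{in}\|_{L^\infty}T)$, $s_0=4\pi Ta/(a+4)$ and $q=(a+4)/(\pi Ta^2)$. The elementary inequality $\frac{1-e^{-a}}{a}e^{a/(a+4)}\le\frac{4}{a+4}$ then yields $k(s_0,t)\le\frac{8\pi qs_0}{1+qs_0}$ for $t\in[0,T]$.
\item The comparison on $(0,s_0)\times(0,T)$ gives $\|n(t)\|_{L^\infty}\le 8\pi q$.
\end{itemize}
Without a step of this kind, your argument proves the theorem only for $M<8\pi$.
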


\begin{Rem}
Theorem \ref{thm:main} generalized the result on the  Keller-Segel system in \cite{Wei2018} to the coupled Navier-Stokes system \eqref{ksns}, which is sharp since the solution will blow up due to the inequality \eqref{eq:blow-up}. 
\end{Rem}

\begin{Rem}
Theorem \ref{thm:main} also improved and generalized the main theorem of Lai, Wei and Zhou in \cite{LWZ2023}. On the one hand, we removed the redundant initial conditions of $n_{in}\log(n_{in}), n_{in}\log(1+|x|)\in L^1(\mathbb{R}^2)$, so it is not really necessary for the initial value to satisfy this condition; on the other hand, we removed the restriction on the gravitational potential function $\phi $, which can be any function belonging to $L^\infty_t\dot{W}^{1,\infty}$. When  $\phi\equiv c$ as in \cite{LWZ2023}, according to the following Theorem  \ref{thm:main2} and \eqref{ksns} one can get $\nabla c\in L^\infty$ and then the conclusion follows. 
\end{Rem}

\begin{Rem}
Our new observation is based on  the local mass of the density function rearrangement satisfies a good inequality that is independent of velocity, which is shown in Proposition \ref{eq:k's equation}, 
\beno
\frac{\partial k}{\partial t}-4\pi s\frac{\partial^2k}{\partial s^2}-k\frac{\partial k}{\partial s}\leq 0,
\eeno
where 
\beno
k(s,t)=\int_0^sn^*(\tau,t)d\tau.
\eeno
Then we proved an improved maximum principle and   applied it by choosing a nice auxiliary function.
\end{Rem}

In detail, Theorem \ref{thm:main} follows from the following a priori estimates of  the two dimensional Keller-Segel equation
with the drift term $u.$
	\begin{Thm}
			\label{thm:main2} Assume that $0\leq n_{in}\in L^1\cap L^\infty(\mathbb{R}^{2})$, $u\in L^\infty\left([0,T]; W^{1,2}(\mathbb{R}^{2})\right)$. Then there exists a global strong 
			 solution $n$  to the Keller-Segel equation in $\mathbb{R}^{2}\times [0,T]$
             \begin{equation}\label{eq:pks-u}
	\left\{
	\begin{array}{lr}
		\partial_tn+u\cdot \nabla n=\triangle n-\nabla\cdot(n\nabla c), \\
		\triangle c+n=0.
	\end{array}
	\right.
\end{equation}
             with the initial data $n_{in}$ provided that the initial mass $\int_{\mathbb{R}^{2}} n_{in}dx=M\leq 8\pi.$ Specifically, the following estimates hold uniformly:
             \begin{itemize}
 \item $M<8\pi$: \ben\label{eq:Mless8pi}\|n(t)\|_{L^\infty}\leq \frac{8\pi}{8\pi-M}\|n_{in}\|_{L^\infty},\quad 0<t<T;\een

\item $M=8\pi$: 
\ben\label{eq:M=8pi}\|n(t)\|_{L^\infty}\leq 8 \|n_{in}\|_{L^\infty} \left(1 + 4T \|n_{in}\|_{L^\infty}\right),\quad 0<t<T.\een

 \end{itemize}
		\end{Thm}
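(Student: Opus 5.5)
The plan is to follow Wei's mass-concentration approach \cite{Wei2018}, working with the rearrangement of $n$ so that the drift $u$ drops out. Let $n^*(\cdot,t)$ be the decreasing rearrangement of $n(\cdot,t)$ as a function of the area variable $s=\pi r^2$, and set $k(s,t)=\int_0^s n^*(\tau,t)\,d\tau$. This is the largest mass carried by any set of measure $s$. The function $k$ is concave and increasing in $s$, with $k(0,t)=0$, $k(\infty,t)=M$ and $\partial_s k(0,t)=\|n(t)\|_{L^\infty}$.

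\textbf{Step 1: the velocity-independent inequality.} For smooth solutions I would derive
\[
\partial_t k-4\pi s\,\partial_s^2 k-k\,\partial_s k\le 0 .
\]
Integrate the equation over the superlevel set $\{n>h\}$, whose measure is $s$. Because $\nabla\cdot u=0$ and $n=h$ on the boundary of this set, the transport term contributes $\int_{\{n>h\}}u\cdot\nabla n=\int_{\partial}h\,u\cdot\nu=h\int_{\{n>h\}}\nabla\cdot u=0$. This is the main observation: the drift is invisible at the level of $k$. The other terms are handled as in the classical case:
\begin{itemize}
\item The chemotactic term gives $\int_{\{n>h\}}n^2 - h\cdot(\text{mass of }\{n>h\})$, and with $n=-\Delta c$ this is at least $k\,\partial_s k$.
\item The diffusion term gives $-\int_{\partial}|\nabla n|$, which is bounded via the isoperimetric inequality and coarea/Talenti arguments by $4\pi s\,\partial_s^2 k$ (note $\partial_s^2k\le0$).
\end{itemize}
To make this rigorous I would regularize and use a Steiner-type/Bandle comparison. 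I would take this step as Proposition \ref{eq:k's equation} already flagged in the remarks.

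\textbf{Step 2: comparison via an improved maximum principle.} The goal is a supersolution $\bar k(s,t)$ of $\partial_t\bar k-4\pi s\,\partial_s^2\bar k-\bar k\,\partial_s\bar k\ge0$ with $\bar k(0,t)=0$, $\bar k\ge M$ at infinity (or $\bar k\ge k$ for large $s$), and $\bar k(\cdot,0)\ge k(\cdot,0)$. Then $k\le\bar k$, and since $k(0)=\bar k(0)=0$ we get $\|n(t)\|_{L^\infty}=\partial_sk(0,t)\le\partial_s\bar k(0,t)$. The maximum principle must allow the degenerate coefficient $s$ at $s=0$ and an unbounded domain. I would prove it by contradiction, looking at the first time $k-\bar k$ attains a positive maximum, perturbed by $\varepsilon(1+t)$ or $\varepsilon s$. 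The nonlinear term is handled by writing $k\partial_sk-\bar k\partial_s\bar k=\tfrac12\partial_s(k^2-\bar k^2)$, which vanishes at an interior maximum where $k=\bar k+\delta$ only after some care. This is where the improvement is needed: at the maximum point $\partial_sk=\partial_s\bar k$, so the difference equals $(k-\bar k)\partial_s\bar k\ge0$, and this term has the favorable sign for contradiction when $\partial_s\bar k\ge0$.

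\textbf{Step 3: choice of auxiliary functions.} For $M<8\pi$, let $A=\|n_{in}\|_{L^\infty}$ and $\lambda=8\pi A/(8\pi-M)$, and take the stationary candidate
\[
\bar k(s)=\frac{8\pi\lambda s}{8\pi+\lambda s},
\]
which is the rearranged steady Keller--Segel bubble and solves $4\pi s\bar k''+\bar k\bar k'=0$. Its limit at infinity is $8\pi>M$, and $\bar k'(0)=\lambda$. Initially $k(s,0)\le\min(As,M)$, and one checks $\min(As,M)\le\bar k(s)$ precisely when $\lambda\ge 8\pi A/(8\pi-M)$, which gives \eqref{eq:Mless8pi}. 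For $M=8\pi$ the stationary family only reaches $8\pi$ asymptotically, so $k(s)=M$ at finite $s$ is not dominated. I would therefore use a time-dependent bubble with a slowly shrinking slope, for instance $\bar k=\frac{8\pi\lambda(t) s}{8\pi+\lambda(t)s}+(\text{correction})$. Alternatively, I would split $k$ as in Wei, using a bound on the tail mass $M-k(s,t)\ge$ something decaying linearly in $t$ derived from the heat part. The constant $8A(1+4TA)$ suggests $\lambda(t)\sim 8A(1+4At)$.

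\textbf{Step 4: conclusion.} Once $L^\infty$ bounds hold, global existence for \eqref{eq:pks-u} with $u\in L^\infty W^{1,2}$ follows by standard local theory and continuation. I expect the $M=8\pi$ supersolution construction, together with the justification of the maximum principle near $s=0$ and $s=\infty$, to be the main obstacle.
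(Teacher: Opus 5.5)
\paragraph{Critical case $M=8\pi$.} Your subcritical argument is the paper's. Your $\bar k=\frac{8\pi\lambda s}{8\pi+\lambda s}$ is exactly $g(s)=\frac{8\pi qs}{1+qs}$ with $q=\lambda/(8\pi)=\|n_{in}\|_{L^\infty}/(8\pi-M)$, and your Step 1 is Proposition \ref{eq:k's equation}. The genuine gap is $M=8\pi$, which you leave open.

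Your time-dependent bubble $\frac{8\pi\lambda(t)s}{8\pi+\lambda(t)s}$ with $\lambda'\ge0$ is a supersolution. But it stays strictly below $8\pi$, so it cannot dominate $k(\cdot,0)$ when, say, $n_{in}$ is compactly supported, since then $k(s,0)=8\pi$ for large $s$. The unspecified ``correction'' and the tail bound ``from the heat part'' are precisely the missing argument.

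What you need is a quantitative, uniform-in-time bound $\sup_{0\le t\le T}k(s_0,t)<8\pi$ at some finite $s_0$. This serves as the boundary condition at $\Lambda=s_0$ for the stationary-bubble comparison on $(0,s_0)$. The paper obtains it by duality. The function $\phi=(2T-t)^{-2}e^{-s/(4\pi(2T-t))}$ solves the adjoint equation $\partial_t\phi+\partial_s^2(4\pi s\phi)=0$ and satisfies $-\frac12\partial_s\phi=\frac{\phi}{8\pi(2T-t)}$. Testing the inequality for $k$ against $\phi$ therefore gives
\[
\frac{d}{dt}\int_0^\infty k\phi\,ds\le\frac{1}{8\pi(2T-t)}\int_0^\infty k^2\phi\,ds\le\frac{M}{8\pi(2T-t)}\int_0^\infty k\phi\,ds .
\]
Exactly when $M=8\pi$, the quantity $R(t)=(2T-t)\int_0^\infty k\phi\,ds$ is nonincreasing.

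The paper then compares $R(t)\ge4\pi e^{-s_0/(4\pi T)}k(s_0,t)$ with $R(0)$, the latter computed from $k(s,0)\le\min\{\|n_{in}\|_{L^\infty}s,8\pi\}$. This gives $k(s_0,t)\le8\pi(1-e^{-a})e^{b}/a$, with $a=1/(\|n_{in}\|_{L^\infty}T)$ and $b=s_0/(4\pi T)$. It chooses $b=a/(a+4)$ and $q=(a+4)/(\pi Ta^2)$, and uses the elementary inequality $\frac{1-e^{-a}}{a}e^{a/(a+4)}\le\frac{4}{a+4}$. This yields $k(s_0,t)\le g(s_0)$, and also $k(s,0)\le g(s)$ on $(0,s_0)$. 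Hence $\|n(t)\|_{L^\infty}\le8\pi q=8\|n_{in}\|_{L^\infty}(1+4T\|n_{in}\|_{L^\infty})$. Without this step or a substitute, your proposal establishes only \eqref{eq:Mless8pi}.

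\paragraph{Maximum principle sign.} Your pointwise maximum-principle argument has the sign backwards. At a point where $w=k-\bar k$ first reaches a positive maximum, you get $0\le\partial_tw-4\pi s\,\partial_s^2w\le w\,\partial_s\bar k$. Since $w\,\partial_s\bar k\ge0$, this is no contradiction, and a perturbation $\varepsilon(1+t)$ does not cure it. You must perturb by $\varepsilon e^{Ct}$, equivalently work with $e^{-Ct}w$, with $C>\sup\partial_s\bar k$; this is a Gronwall step. That is what the $L^2$ estimate for $w_+$ in Proposition \ref{lem:dn1995} does. The energy form also avoids needing classical regularity of $k$, for which the inequality is only known almost everywhere.

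\paragraph{Chemotactic term.} A minor slip in Step 1: the chemotactic contribution over $\{n>h\}$ is exactly $h\int_{\{n>h\}}n=k\,\partial_sk$. It is not $\int_{\{n>h\}}n^2-h\cdot(\text{mass})$.
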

	
\par The rest of the paper is organized as follows. In Section 2  we finish the proof of Theorem \ref{thm:main} under the assumption of  Theorem \ref{thm:main2}.   In section 3, 
the improved maximum principle is applied to prove the uniform boundedness of the  density function by choosing a suitable auxiliary function and we complete the proof of Theorem \ref{thm:main2}.
In Section 4 we show the local mass of the density function rearrangement satisfies a good inequality that is independent of velocity. In Section 5 we prove an improved maximum principle.  At last, the last section is devoted to proving the local wellposed result of the system \eqref{ksns}.

\section{Proof of Theorem \ref{thm:main}}
In this section, assuming that Theorem \ref{thm:main2} holds, we complete the proof of Theorem \ref{thm:main}.

\begin{proof}[Proof of Theorem \ref{thm:main}] By the local well-posed result of Theorem \ref{thm:local},  there exists a local strong solution to the system \eqref{ksns}. Standard bootstrapping arguments allow us to upgrade this mild solution to a strong solution for $t > 0$. 
Let $T$ be a possible blow-up time. We have $n\geq 0$ due to the maximum principle and 
$\|n(t)\|_{L^1}=\int_{\R^2}n(x,t)dx=\|n_0\|_{L^1}$ by the first equation of \eqref{ksns}, thus $n\in  L^\infty(0,T;L^1(\mathbb{R}^2))$. 
As $n_{in}\in L^1(\mathbb{R}^2)\bigcap L^\infty(\mathbb{R}^2)$ and $u\in  L^\infty(0,T';H^1(\mathbb{R}^2))$ for all $0<T'<T$.
If $T<\infty$, then by Theorem \ref{thm:main2} we have $\|n(t)\|_{L^\infty}\leq C(1+T')$ for all $0<t<T'<T$. 

Hence $n\in  L^\infty(0,T;L^1\cap L^\infty(\mathbb{R}^2))$.
Let $\omega=\partial_yu_1-\partial_xu_2=\nabla^T\cdot u$ be the vorticity of the velocity $u$, which satisfies the following equation due to \eqref{ksns}:
\ben
\partial_t \omega+u\cdot\nabla \omega-\triangle \omega=\nabla^T\cdot(n\nabla\phi)\een
which implies that
\beno
\frac{d}{dt}\|\omega\|_{L^2(\mathbb{R}^2)}^2\leq \|n\nabla\phi\|_{L^2(\mathbb{R}^2)}^2\leq C, \quad 0<t<T
\eeno
due to $n\in  L^\infty(0,T;L^1\cap L^\infty(\mathbb{R}^2))$. Similarly by the third equation of \eqref{ksns} we have 
$\frac{d}{dt}\|u\|_{L^2(\mathbb{R}^2)}\leq \|n\nabla\phi\|_{L^2(\mathbb{R}^2)}\leq C$. Thus $u\in  L^\infty(0,T;H^1(\mathbb{R}^2))$, which is a contradiction by Remark \ref{6.2}. Thus we must have $T=+\infty$.
\end{proof}

	\section{Proof of Theorem \ref{thm:main2}}

In this section, we will state the local mass of the density function rearrangement $k$ satisfies a good inequality that is independent of velocity, and establish a maximum principle. 
By using the two important properties, our main objective is to complete the proof of Theorem \ref{thm:main2}.

Let $T$ be the possible blow-up time, and $s\geq 0.$ Define 
\beno
\mu(\sigma,t)=|\{x\in \mathbb{R}^2; n(x,t)>\sigma\}|\eeno
and the  rearrangement function of $n(x,t)$ is 
\beno
n^*(s,t)=\inf_{\sigma>0}\{\sigma>0; \mu(\sigma,t)\leq s\}.
\eeno
Then it follows that
\ben\label{eq:s-n}
s=n^*(\mu(s,t), t ).
\een
Let 
\ben\label{eq:k-def}
k(s,t)=\int_0^sn^*(\tau,t)d\tau.
\een

Motivated by  Lemma 4 of Diaz and Nagai
\cite{DN1995} or Lemma 4.3 in \cite{SS2002}, we prove The following lemma, which plays an important role in the proof of the main theorem.
\begin{Prop}
    \label{eq:k's equation} Let $(n,u)$ be a local strong solution of (\ref{ksns}) in $[0,T)$ as stated  in Theorem  \ref{thm:local} and  and $k$ is defined in \eqref{eq:k-def}.
Then there holds
\ben\label{eq:mono}
\frac{\partial k}{\partial t}-4\pi s\frac{\partial^2k}{\partial s^2}-k\frac{\partial k}{\partial s}\leq 0
\een
for a.e. $(s,t)\in (0,\infty)\times (2\tau,T)$ with $\tau>0$. Moreover,
\ben\label{eq:mono-b}
k|_{s=0}=0, \frac{\partial k}{\partial s}|_{s=\infty}=0.
\een
\end{Prop}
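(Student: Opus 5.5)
The plan is to follow the classical Talenti/Diaz--Nagai argument: integrate the equation for $n$ over superlevel sets and use the coarea formula. For a.e.\ $t\in(2\tau,T)$ and $\sigma>0$, set $\Omega_\sigma(t)=\{x:n(x,t)>\sigma\}$, so that $|\Omega_\sigma|=\mu(\sigma,t)$. Since $n$ is smooth for $t>\tau$ by parabolic regularity (the local strong solution of Theorem \ref{thm:local} plus bootstrapping), Sard's theorem gives that a.e.\ level set is a smooth curve with $\nabla n\neq0$ on it. The first step is to compute
\[
\frac{d}{dt}\int_{\Omega_\sigma}(n-\sigma)\,dx=\int_{\Omega_\sigma}\partial_t n\,dx .
\]
Then I substitute $\partial_t n=\Delta n-\nabla\cdot(n\nabla c)-u\cdot\nabla n$ and treat the three terms separately. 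The key observation is that the drift term drops out: since $\nabla\cdot u=0$,
\[
\int_{\Omega_\sigma}u\cdot\nabla n\,dx=\int_{\Omega_\sigma}\nabla\cdot\big(u(n-\sigma)\big)\,dx=0,
\]
because $n-\sigma=0$ on $\partial\Omega_\sigma$. Some decay of $u$ and $n$ at infinity is needed here, which holds since $\Omega_\sigma$ has finite measure for $\sigma>0$. For the chemotaxis term I write $-\nabla\cdot(n\nabla c)=-\nabla n\cdot\nabla c+n^2$ and get
\[
\int_{\Omega_\sigma}-\nabla\cdot(n\nabla c)\,dx=-\sigma\int_{\partial\Omega_\sigma}\partial_\nu c=\sigma\int_{\Omega_\sigma}n\,dx\;\le\;\sigma\,k(\mu,t).
\]
In fact this is an equality with $\int_{\Omega_\sigma}n=k(\mu(\sigma,t),t)$.

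The diffusion term gives $-\int_{\partial\Omega_\sigma}|\nabla n|\,dS$. By Cauchy--Schwarz, the coarea formula $-\mu_\sigma=\int_{\partial\Omega_\sigma}|\nabla n|^{-1}dS$, and the isoperimetric inequality $P(\Omega_\sigma)^2\ge4\pi\mu$, we obtain
\[
\int_{\partial\Omega_\sigma}|\nabla n|\,dS\ \ge\ \frac{P(\Omega_\sigma)^2}{-\mu_\sigma}\ \ge\ \frac{4\pi\mu}{-\mu_\sigma}.
\]

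Next I translate everything to the $s$-variable. I use $\int_{\Omega_\sigma}(n-\sigma)=k(\mu,t)-\sigma\mu$ with $\sigma=n^*(\mu,t)=\partial_s k(\mu,t)$ (from \eqref{eq:s-n}) and $\partial_s^2k=\partial_s n^*=1/\mu_\sigma$ at $s=\mu(\sigma,t)$. Differentiating $k(\mu(\sigma,t),t)-\sigma\mu(\sigma,t)$ in $t$, the terms involving $\mu_t$ cancel because $\partial_s k=\sigma$, leaving $\partial_t k(s,t)$ evaluated at $s=\mu(\sigma,t)$. Combining the pieces gives
\[
\partial_t k\le 4\pi s\,\partial_s^2 k+k\,\partial_s k
\]
at points $s=\mu(\sigma,t)$. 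Where $n^*$ has flat pieces (plateaus of $n$ of positive measure), $\partial_s^2k=0$ and a direct argument is needed; there, $\partial_t k\le k\partial_sk$ follows in the same way by comparing nearby levels. The range of $\mu$ covers a.e.\ $s$ in $(0,|\{n>0\}|)$, and for larger $s$ we have $k\equiv M$ and $\partial_s k=0$, so \eqref{eq:mono} holds trivially there.

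The boundary conditions \eqref{eq:mono-b} are immediate: $k(0,t)=0$ by definition, and $\partial_sk=n^*(s,t)\to0$ as $s\to\infty$ because $n(\cdot,t)\in L^1$.

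I expect the main obstacle to be rigor in the time-differentiation step. This means justifying that $k$ is differentiable a.e.\ in $(s,t)$, that $\partial_t\int_{\Omega_\sigma}(n-\sigma)_+=\int_{\Omega_\sigma}\partial_tn$ (the function $(n-\sigma)_+$ is Lipschitz in $n$, so this is fine), and handling the non-generic levels. I would follow Lemma 4 of \cite{DN1995} and Lemma 4.3 of \cite{SS2002}. Concretely, I would prove the inequality in integrated-in-$\sigma$ (weak) form for $(n-\sigma)_+$, and deduce the pointwise a.e.\ statement via the monotone change of variables $s=\mu(\sigma,t)$. The restriction $t>2\tau$ is there to have smoothness available for this.
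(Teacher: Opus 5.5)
Your proposal is correct and follows essentially the same route as the paper. The shared steps are: the Mossino--Rakotoson cancellation for $\frac{d}{dt}\int_{\{n>\sigma\}}(n-\sigma)\,dx$; the drift term vanishing because $\nabla\cdot u=0$; the chemotactic term giving $\sigma\int_{\{n>\sigma\}}n\,dx=k\,\partial_s k$; and the diffusion flux bounded below by coarea, Cauchy--Schwarz and the isoperimetric inequality. The difference is only presentational: the paper integrates over superlevel sets weakly, testing with the truncations $\Phi(n)$, $\Psi(n)$ and letting $h\to0$ so that the flux appears as $-\partial_\rho\int_{\{n>\rho\}}|\nabla n|^2\,dx$, which is the weak form you say you would use for rigor, while your separate treatment of plateaus of $n^*$ addresses a case the paper does not discuss.
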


Next we aim to prove a maximum principle on the inequality of \eqref{eq:mono-b}, which is important for the arguments of Theorem \ref{thm:main2}.

Let $\Omega$ be a domain in the two dimensional space, $Q_T=\Omega\times (0,T)$, $0<\Lambda\leq |\Omega|$, $\Omega^*=(0,\Lambda)$, and $Q_T^*=\Omega^*\times (0,T)$. $C_1(t)$ is a positive bounded function locally on $[0,T)$.
Motivated by Prop A.1 in \cite{DN1995} and  Lemma 4.3 in \cite{SS2002} we prove the following slightly different  maximum principle.

\begin{Prop}
    \label{lem:dn1995}
Let $f,g$ be functions on $\overline{Q_T^*}$ satisfying the following:\\
(i) $f,g\in L_{loc}^\infty(Q_T^*)\cap H_{loc}^1(0,T; L^2(\Omega^*))\cap L_{loc}^2(0,T; W^{2,2}(\delta,\Lambda))$ with $\delta>0$;\\
(ii) 
\beno
\left|  f(s,t)\right|+\left| \frac{\partial f}{\partial s}(s,t)\right|\leq C_1(t), \left| \frac{\partial g}{\partial s}(s,t)\right|\leq C_1(t), 0<s<\Lambda, 0<t<T;
\eeno
(iii)
\beno
\frac{\partial f}{\partial t}-4\pi s\frac{\partial^2f}{\partial s^2}-f\frac{\partial f}{\partial s}\leq \frac{\partial g}{\partial t}-4\pi s\frac{\partial^2g}{\partial s^2}-g\frac{\partial g}{\partial s}
\eeno
a.e. in  $\overline{Q_T^*}$;\\
(iv) $0=f(0,t)\leq g(0,t)$  for any $t\in (0,T)$;\\
(v)  $\frac{\partial f}{\partial s}(\Lambda, t) \leq \frac{\partial g}{\partial s}(\Lambda, t) $ or  $f(\Lambda, t) \leq g(\Lambda, t) $ with $\Lambda<\infty$, for any $t\in (0,T)$;\\
(vi) $f(s,0)\leq g(s,0)$ on $\Omega^*$;\\
(vii) $g(s,t)\geq 0$ on $\overline{Q_T^*}$.\\
Then we have $f\leq g$ on $\overline{Q_T^*}$.
\end{Prop}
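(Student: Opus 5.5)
We work with the difference $w=f-g$ and show $w_+\equiv 0$ by an $L^2$ energy estimate for $w_+$ with a weight that absorbs the degeneracy of $4\pi s\,\partial_s^2$ at $s=0$. Subtracting the two sides of (iii) and writing
\[
f\partial_s f-g\partial_s g = f\,\partial_s w + w\,\partial_s g ,
\]
we get, a.e. in $Q_T^*$,
\[
\partial_t w - 4\pi s\,\partial_s^2 w - f\,\partial_s w - w\,\partial_s g \le 0 .
\]
By (ii) the coefficients $f$, $\partial_s g$ are bounded by $C_1(t)$, and so is $\partial_s f$. This makes the inequality a linear parabolic inequality with bounded lower-order coefficients, degenerate only at $s=0$.

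Next we multiply by $w_+ /s$ (or by $w_+$ times a weight $s^{-1}$, cut off near $s=0$ by $\delta$) and integrate over $(\delta,\Lambda)$. The weight is chosen so that the principal term is in divergence form: $-4\pi s\,\partial_s^2 w\cdot w_+/s=-4\pi\,\partial_s^2 w\, w_+$. Integrating by parts gives $4\pi\int|\partial_s w_+|^2$ plus boundary terms. The boundary terms are handled as follows.
\begin{itemize}
\item At $s=\Lambda$, hypothesis (v) gives either $w_+(\Lambda)=0$ or $\partial_s w(\Lambda)\le 0$; in both cases the boundary term $-4\pi\,\partial_s w\, w_+|_{\Lambda}$ has the good sign.
\item At $s=\delta$, we use $w(0,t)\le 0$ from (iv) together with the Lipschitz bounds in (ii). These give $w_+(\delta)\le C_1\delta$ and $|\partial_s w|\le 2C_1$, so the term at $\delta$ is $O(\delta)$ and vanishes as $\delta\to 0$.
\end{itemize}

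The drift term $f\,\partial_s w\, w_+/s$ is the delicate one, because of the singular weight $1/s$. We write it as $\tfrac12 f\,\partial_s(w_+^2)/s$ and integrate by parts once more. This produces $-\tfrac12\int w_+^2\,\partial_s(f/s)$, which contains
\[
\frac{f}{s^2}w_+^2 \qquad\text{and}\qquad \frac{\partial_s f}{s}\,w_+^2 .
\]
For the first, $f(0)=0$ and (ii) give $|f|\le C_1 s$, so $f/s^2\le C_1/s$. Hence every resulting term is bounded by $C(t)\int w_+^2/s$, i.e.\ by the weighted energy itself. The zeroth-order term $w\,\partial_s g\, w_+/s$ is bounded in the same way by $C_1\int w_+^2/s$.

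Hypothesis (vii) and the sign of $f$ are where the comparison uses the structure of the nonlinearity. If needed, we rewrite the nonlinear difference the other way, as $g\,\partial_s w + w\,\partial_s f$, and use $g\ge 0$ so that the transport term $-g\,\partial_s w\,w_+/s$ after integration by parts contributes $\tfrac12\int w_+^2\,\partial_s(g/s)$. In this version the boundary contribution at $s=\Lambda$ has the favorable sign, since $g\ge 0$ there.

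Combining these estimates gives
\[
\frac{d}{dt}\int_0^\Lambda \frac{w_+^2}{s}\,ds \le C(t)\int_0^\Lambda \frac{w_+^2}{s}\,ds ,
\]
with $C(t)$ locally bounded. The weighted integral is finite because $w_+\le C_1 s$ near $0$. By (vi) its value at $t=0$ is zero, so Gronwall's inequality gives $w_+\equiv 0$ on $Q_T^*$. Closedness (continuity) then extends $f\le g$ to $\overline{Q_T^*}$.

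The regularity in (i) justifies multiplying by $w_+/s$ and integrating by parts on $(\delta,\Lambda)$. The time derivative is handled by the $H^1_{loc}$ in $t$ regularity and the chain rule for $w_+^2$.

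I expect the main obstacle to be controlling the singular weight near $s=0$: making all boundary terms at $\delta$ vanish and bounding the drift term by the weighted energy. Both rely crucially on $f(0,t)=0$ and the uniform Lipschitz bounds in (ii). If the weight $1/s$ causes trouble, my fallback is to use the unweighted energy $\int w_+^2$ and bound $\int s\,|\partial_s w_+|^2$ against the drift via Hardy-type estimates.
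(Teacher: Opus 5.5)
There is a gap in your drift step at $s=\Lambda$. Integrating $-\int_\delta^\Lambda \frac{f}{s}\,\partial_s w\,w_+\,ds=-\frac12\int_\delta^\Lambda \frac{f}{s}\,\partial_s(w_+^2)\,ds$ by parts gives the interior term $\frac12\int w_+^2\,\partial_s(f/s)$ and the harmless term at $\delta$. It also gives the boundary term $-\frac{f(\Lambda,t)}{2\Lambda}\,w_+(\Lambda,t)^2$ on the left-hand side, which you drop when you say ``every resulting term is bounded by $C(t)\int w_+^2/s$''. This term has the bad sign: wherever $w_+(\Lambda,t)>0$ we have $f(\Lambda,t)>g(\Lambda,t)\ge 0$. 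Under the Neumann alternative in (v), nothing forces $w_+(\Lambda,t)=0$. (Heuristically, for $f>0$ the drift $-f\,\partial_s w$ transports from right to left, so $s=\Lambda$ is an inflow boundary.)

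Your second splitting does not help. It leaves $-\frac{g(\Lambda,t)}{2\Lambda}\,w_+(\Lambda,t)^2\le 0$ on the left-hand side, so $g\ge 0$ makes this term unfavorable, not favorable; your sign claim is reversed. Moreover, the weight $1/s$ removes the term $4\pi\int w_+\,\partial_s w_+=2\pi w_+(\Lambda,t)^2$ that the diffusion produces in the unweighted computation. So your diffusion term has no boundary quantity available to absorb it.

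The repair is not to integrate the drift by parts. By (ii) and (iv), $|f|\le C_1\min\{1,s\}$, hence $|f|/s\le C_1 s^{-1/2}$ and
\[
\int_\delta^\Lambda \frac{|f|}{s}\,|\partial_s w_+|\,w_+\,ds\le 2\pi\int_\delta^\Lambda |\partial_s w_+|^2\,ds+\frac{C_1^2}{8\pi}\int_\delta^\Lambda \frac{w_+^2}{s}\,ds .
\]
The first term on the right is absorbed by your $4\pi\int|\partial_s w_+|^2$. Alternatively, for $\Lambda<\infty$, keep the boundary term and absorb $\frac{C_1}{2\Lambda}w_+(\Lambda,t)^2$ with a one-dimensional trace inequality. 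With either fix your Gronwall argument for $\int w_+^2/s$ closes, and with your first splitting it does not even use (vii).

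This is essentially the paper's device, but the paper works unweighted. It tests with $w_+$ itself, so the diffusion gives $4\pi\int s|\partial_s w_+|^2$ plus the nonnegative terms $-4\pi\Lambda\,\partial_s w\,w_+|_{s=\Lambda}$ and $2\pi w_+(\Lambda,t)^2$. It then bounds the drift directly via $0\le g\le f\le C_1\sqrt{s}$ on $\{w_+>0\}$ and Young's inequality against $\int s|\partial_s w_+|^2$. The paper's route avoids the singular weight and the $\delta$-cutoff. Yours trades the degenerate Dirichlet term for a nondegenerate one, at the cost of that bookkeeping near $s=0$.
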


\begin{proof}[Proof of Theorem \ref{thm:main2}]

Note that it follows from Proposition \ref{eq:k's equation} that
\begin{equation*}
    \partial_t k - 4\pi s \partial_s^2 k - k \partial_s k \le 0, \quad k|_{s=0}=0, \quad k|_{s=\infty}=M,
\end{equation*}
and 
\begin{equation*}
    \partial_s k|_{s=\infty}=n^*(\infty,t)=0,\quad  \partial_s k \ge 0.
\end{equation*}

\noindent  {\bf Step I: $M < 8\pi$.} Define an auxiliary function as follows:
\ben\label{eq:auxiliary-g}
g(s):=\frac{8\pi q s}{1 + q s}\geq 0,\een
which implies that
\beno
- 4\pi s \partial_s^2 g - g \partial_s g = 0.
\eeno
Then
there exists $ q > 0$ such that $k(s,0) \le g(s)$ for any $ s \ge 0$. In fact, by choosing $s_0>0$ and $q$ large enough we have
\ben\label{eq:ks1}
\frac{k(s,0)}{s} \le \|n_{in} \|_{L^\infty(\mathbb{R}^2)}\leq \frac{8\pi q }{1 + qs },\quad 0<s\leq s_0,
\een
and
\ben\label{eq:ks2}
{k(s,0)} \le M  \leq \frac{8\pi qs }{1 + qs },\quad s\geq s_0,
\een
where the sharp value $(s_0, q_0)$ is the intersection point of the curve $L_1: q=\frac{\|n_{in} \|_{L^\infty} }{8\pi-\|n_{in} \|_{L^\infty}s}$ and $L_2: q=\frac{M}{(8\pi-M)s}$ shown as follows. More precisely if $s_0=\frac{M}{\|n_{in} \|_{L^\infty}}$, 
$q=q_0=\frac{\|n_{in} \|_{L^\infty} }{8\pi-M}$, then we have $\frac{8\pi q_0 }{1 + q_0s_0}=\|n_{in} \|_{L^\infty(\mathbb{R}^2)}$, 
$\frac{8\pi q_0s_0}{1 + q_0s_0}=M$, which implies \eqref{eq:ks1}, \eqref{eq:ks2}.

\begin{tikzpicture}[scale=1.3] 

    \draw[thick, ->] (-0.5,0) -- (6.5,0) node[right] {$s$};
    \draw[thick, ->] (0,-0.5) -- (0,6.5) node[above] {$q$};
    \node[below left] at (0,0) {$0$};


    \draw[blue, thick, domain=0:4.2, samples=100] 
        plot (\x, {5 / (5 - \x)}) 
        node[right, scale=0.9] {$L_1$};

    \draw[red, thick, domain=1.2:6.0, samples=100] 
        plot (\x, {7.5 / \x}) 
        node[above right, scale=0.9] {$L_2$};

    \coordinate (P) at (3, 2.5); 
    \fill[black] (P) circle (2pt);
    \node[above right, yshift=2pt] at (P) {$(s_0, q_0)$};

    \draw[dashed, gray, thick] (3,0) -- (3,2.5);
    \draw[dashed, gray, thick] (0,2.5) -- (3,2.5);

    \node[below, yshift=-2pt] at (3,0) {$s_0$};
    \node[left, xshift=-2pt] at (0,2.5) {$q_0$};

    \fill[black] (0,1) circle (1.5pt);
    \node[left, xshift=-2pt, scale=0.9] at (0,1) {$\frac{\|n_{in}\|_{L^\infty}}{8\pi}$};

\end{tikzpicture}

Hence take $q\geq q_0=\frac{\|n_{in}\|_{L^\infty}}{8\pi - M}$ and we have 
\ben\label{eq:ks0}
k(s,0) \le g(s), \quad 0\leq s\leq s_0= \frac{M}{\|n_{in}\|_{L^\infty}}.
\een
Moreover, 
by \eqref{eq:ks0} we have
\beno
{k(s,t)} \le M \leq g(s), \quad s\geq s_0.\eeno
Then  by Proposition \ref{lem:dn1995} in the domain $(0,s_0)\times(0,T)$ we have
\begin{equation*}
    k(s,t) \le \frac{8\pi q s}{1 + q s} ,
\end{equation*}
which implies 
\ben\label{eq:bound-Mless8pi}\|n (\cdot,t)\|_{L^\infty(\mathbb{R}^2)}=n^*(0,t)=\partial_s k|_{s=0}=  \lim_{s\rightarrow 0} \frac{k(s,t)}{s}\le  8\pi q= \frac{8\pi\|n_{in}\|_{L^\infty}}{8\pi - M},
\een
for any $t<T$ and specially $q$ is independent of $t.$

\noindent {\bf Step II:  $M = 8\pi$.} Take an auxiliary function \beno\phi(s,t) = (2T - t)^{-2} e^{-\frac{s}{4\pi(2T - t)}},\eeno which satisfies 
\ben\label{eq:phi-eq}
    \partial_t \phi + \partial_s^2 (4\pi s \phi) = 0.
\een
Furthermore, there holds
\begin{align*}
    \partial_t \int_0^\infty k \phi ds &= \int_0^\infty (\partial_t k \phi + k \partial_t \phi)ds \leq  \int_0^\infty [(4\pi s \partial_s^2 k + k \partial_s k) \phi + k \partial_t \phi] ds\\
    &= \int_0^\infty \left[ k \partial_s^2 (4\pi s \phi) - \frac{k^2}{2} \partial_s \phi + k \partial_t \phi \right]ds \\
    &= -\frac{1}{2} \int_0^\infty k^2 \partial_s \phi ds= \frac{1}{8\pi(2T - t)} \int_0^\infty k^2 \phi ds\le \frac{M}{8\pi(2T - t)} \int_0^\infty k \phi ds,
\end{align*}
which implies 
\begin{equation*}
    \partial_t \int_0^\infty (2T - t) k \phi ds\le 0 
\end{equation*}
due to  $M = 8\pi$. Let
\beno
R(t)=\int_0^\infty (2T - t) k \phi ds
\eeno   then
\begin{equation*}
    \frac{d}{dt} R(t)\leq0 ,\quad  R(t) \leq R(0) ,\quad \forall\ 0\le t\le T. 
\end{equation*}
Using $\frac{k(s,0)}{s} \le \|n_{in} \|_{L^\infty(\mathbb{R}^2)}$ and $k(s,0)\leq M=8\pi$ we have
\begin{align*}
    R(0) &= \int_0^\infty (2T) k(s,0) \phi(s,0) ds= (2T)^{-1} \int_0^\infty e^{-\frac{s}{8\pi T}} k(s,0) \, ds \\
    &\leq (2T)^{-1}\int_0^{\infty}e^{-\frac{s}{8\pi T}}\min\{\|n_{in} \|_{L^\infty}s,8\pi\} ds\\
    &= (2T)^{-1}\int_0^{\infty}8\pi Te^{-\frac{s}{8\pi T}}\partial_s\min\{\|n_{in} \|_{L^\infty}s,8\pi\} ds\\
    &=4\pi\|n_{in} \|_{L^\infty}\int_0^{{8\pi}/\|n_{in} \|_{L^\infty}}e^{-\frac{s}{8\pi T}}ds=4\pi\|n_{in} \|_{L^\infty}8\pi T(1-e^{-\frac{1}{\|n_{in} \|_{L^\infty} T}})
  \end{align*}
Consequently, we have
\begin{align*}
    &R(0)\geq R(t) =(2T - t)^{-1} \int_{0}^\infty e^{-\frac{s}{4\pi(2T - t)}} k(s, t) \, ds\\ \ge& (2T - t)^{-1} \int_{s_0}^\infty e^{-\frac{s}{4\pi(2T - t)}} k(s_0, t) \, ds = 4\pi e^{-\frac{s_0}{4\pi(2T - t)}} k(s_0, t)\ge4\pi e^{-\frac{s_0}{4\pi T }} k(s_0, t),
\end{align*}
which implies
\begin{equation*}
   { k(s_0, t) \le\frac{R(0)}{4\pi} e^{\frac{s_0}{4\pi T}} \le \|n_{in} \|_{L^\infty}8\pi T(1-e^{-\frac{1}{\|n_{in} \|_{L^\infty} T}})e^{\frac{s_0}{4\pi T}},} \quad \forall~ 0 \le t \le T.
\end{equation*}
Let $a=\frac{1}{\|n_{in} \|_{L^\infty} T}$, $b=\frac{s_0}{4\pi T}$ then we have \begin{equation}\label{b1}
   { k(s_0, t) \le 8\pi (1-e^{-a})e^{b}/a,} \quad \forall~ 0 \le t \le T,\quad b>0,\quad s_0=4\pi Tb.
\end{equation}
Now we claim that\begin{align}\label{b2}
    &\frac{1-e^{-a}}{a}e^{\frac{a}{a+4}}\le\frac{4}{a+4},\quad\forall~ a>0.
\end{align}In fact let $F(a)=\frac{1-e^{-a}}{a}e^{\frac{a}{a+4}}\frac{a+4}{4}$ then for $a>0$ we have $F(a)>0$ and\begin{align*}
    &\frac{F'(a)}{F(a)}=\frac{1}{e^a-1}-\frac{1}{a}+\frac{4}{(a+4)^2}+\frac{1}{a+4}=\frac{1}{e^a-1}-\frac{4^2}{a(a+4)^2}<0,
\end{align*}as $e^a-1>a+a^2/2+a^3/6>a(1+a/4)^2 $. Thus $F$ is decreasing for $a>0$ and\begin{align*}
    &{F(a)}\leq \lim_{a\to0+}F(a)=\lim_{a\to0+}\frac{1-e^{-a}}{a}=1,\quad \forall~ a>0,
\end{align*}which implies \eqref{b2}. Let $b=\frac{a}{a+4}$, $s_0=4\pi Tb$, $q=\frac{4}{as_0}=\frac{1}{\pi Tab}=\frac{a+4}{\pi Ta^2}$, $g(s)=\frac{8\pi qs}{1+qs}$, then $- 4\pi s \partial_s^2 g - g \partial_s g = 0$,  by  \eqref{b1} and \eqref{b2} we have\begin{equation}\label{b3}
   { k(s_0, t) \le 8\pi \frac{1-e^{-a}}{a}e^{\frac{a}{a+4}}\le8\pi \frac{4}{a+4}=\frac{8\pi qs_0}{1+qs_0}=g(s_0),\quad \forall~ 0 \le t \le T.} 
\end{equation}
Moreover, for $0<s<s_0$ we have
\beno
\frac{k(s,0)}{s}\leq \|n_{in} \|_{L^\infty}\leq 8 \|n_{in} \|_{L^\infty} =\frac{8\pi q}{1+qs_0}\leq \frac{8\pi q}{1+qs}.
\eeno
Then  by Proposition \ref{lem:dn1995} in the domain $(0,s_0)\times(0,T)$ we have
\begin{equation*}
    k(s,t) \le g(s)=\frac{8\pi q s}{1 + q s} ,
\end{equation*}
which implies (recall that $a=\frac{1}{\|n_{in} \|_{L^\infty} T}$)
\ben\label{eq3}&\|n (\cdot,t)\|_{L^\infty(\mathbb{R}^2)}=n^*(0,t)=\partial_s k|_{s=0}=\lim_{s\rightarrow 0} \frac{k(s,t)}{s}\le  8\pi q= \frac{8(a+4)}{Ta^2}\\&=8\|n_{in} \|_{L^\infty}(1+4T\|n_{in} \|_{L^\infty}),\nonumber
\een
for any $t<T$. 
This completes the proof.
\end{proof}

\section{Monotonicity property  and proof of Proposition \ref{eq:k's equation}}

In this section, we are committed to proving Proposition \ref{eq:k's equation}.

Before the proof of  Proposition \ref{eq:k's equation}, we give the following useful lemma.
\begin{Lem}
There hold
\ben\label{eq:mr}
\int_{\{x\in \mathbb{R}^2; n(x,t)>s\}}\partial_t n(x,t) dx=\frac{\partial k}{\partial t}(\mu(s,t),t ),
\een
and
\ben\label{eq:mr-cor}
k(\mu(s,t),t)=\int_{\{x\in \mathbb{R}^2; n(x,t)>s\}} n(x,t) dx.
\een
\end{Lem}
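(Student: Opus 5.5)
The plan is to prove \eqref{eq:mr-cor} first, since it is purely a statement about rearrangements, and then obtain \eqref{eq:mr} by differentiating it in time.

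\textbf{The identity \eqref{eq:mr-cor}.} By equimeasurability, the distribution function of $n^*(\cdot,t)$ on $(0,\infty)$ coincides with $\mu(\cdot,t)$. Since $n^*$ is nonincreasing and right-continuous, the set $\{\tau>0: n^*(\tau,t)>s\}$ is exactly the interval $(0,\mu(s,t))$. The layer-cake formula then gives
\begin{equation*}
\int_{\{n>s\}} n\,dx=\int_s^\infty \mu(\sigma,t)\,d\sigma+s\,\mu(s,t)=\int_0^{\mu(s,t)} n^*(\tau,t)\,d\tau=k(\mu(s,t),t).
\end{equation*}
This settles \eqref{eq:mr-cor}.

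\textbf{The identity \eqref{eq:mr}.} I would use the sup characterization
\begin{equation*}
k(\sigma,t)=\sup\Big\{\int_E n(x,t)\,dx:\ |E|=\sigma\Big\},
\end{equation*}
together with the fact that for $\sigma=\mu(s,t)$ the supremum is attained at $E_s=\{n(\cdot,t)>s\}$. Fix $t$ and $s$, and set $\sigma=\mu(s,t)$.

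For $h>0$, taking $E_s$ as a competitor gives
\begin{equation*}
k(\sigma,t+h)\ge \int_{E_s} n(x,t+h)\,dx,
\end{equation*}
while at time $t$ we have equality. Subtracting, dividing by $\pm h$ and letting $h\to0^\pm$ gives, wherever $\partial_t k(\sigma,t)$ exists, both
\begin{equation*}
\partial_t k\ge \int_{E_s}\partial_t n\,dx \quad (h>0) \qquad\text{and}\qquad \partial_t k\le \int_{E_s}\partial_t n\,dx \quad (h<0).
\end{equation*}
Hence equality holds. This is a Danskin-type argument: the function $t'\mapsto k(\sigma,t')-\int_{E_s} n(x,t')\,dx$ is nonnegative and vanishes at $t'=t$, so its derivative is zero there.

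The regularity this needs is that $n\in H^1_{loc}(0,T;L^2)$ for the strong solution (from Theorem \ref{thm:local} and parabolic regularity), so $t\mapsto\int_E n\,dx$ is absolutely continuous uniformly in $E$. It follows that $k(\sigma,\cdot)$ is locally Lipschitz, since $|k(\sigma,t)-k(\sigma,t')|\le \sup_{|E|=\sigma}\int_E|n(t)-n(t')|$, and therefore differentiable for a.e. $t$.

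\textbf{Main obstacle.} The delicate point is the meaning of ``a.e.'' in \eqref{eq:mr}. The identity should be understood for a.e. $t$ and all $s$ such that $\partial_t k$ exists at $(\mu(s,t),t)$. An alternative route is to note that $\partial_t k(\sigma,t)=\int_0^\sigma \partial_t n^*$, so it suffices to justify the equality in $L^1_{loc}$ in $(\sigma,t)$ by a change of variables $\sigma=\mu(s,t)$.

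Plateaus of $n$, where $|\{n=s\}|>0$, are a genuine subtlety, because there $\mu(\cdot,t)$ jumps. On such a plateau, however, $\partial_t n=0$ a.e. on $\{n=s\}$ by the standard Sobolev-in-time fact (Stampacchia). So both the choice $E_s$ and the closed set $\{n\ge s\}$ give the same value, and the derivative is unambiguous. I would verify this step carefully, and otherwise carry out the proof through the competitor argument above.
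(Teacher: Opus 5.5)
Your argument is correct. For \eqref{eq:mr-cor} it coincides with the paper's (equimeasurability plus layer-cake). For \eqref{eq:mr} you take a genuinely different route.

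The paper introduces the excess mass $G(t)=\int_{\{n>s\}}(n-s)\,dx$ and differentiates it twice: once in physical space, giving $\int_{\{n>s\}}\partial_t n\,dx$, and once as $k(\mu(s,t),t)-s\mu(s,t)$ via the chain rule. The $\partial_t\mu$ terms cancel because $\partial_\sigma k(\mu(s,t),t)=n^*(\mu(s,t),t)=s$. You instead freeze $\sigma=\mu(s,t)$, use $k(\sigma,t)=\sup_{|E|=\sigma}\int_E n$, and touch $k(\sigma,\cdot)$ from below at $t$ by $t'\mapsto\int_{E_s}n(x,t')\,dx$.

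The paper's computation is shorter but formal. It tacitly needs $t\mapsto\mu(s,t)$ and $G$ to be differentiable, and both can fail at a time when $n(\cdot,t)$ has a plateau at level $s$ on which $\partial_t n\neq 0$. For instance, take $n=f+tg$ with $f\equiv s$ on an annulus $A$ and $g>0$ on $A$: then $\mu(s,\cdot)$ jumps by $|A|$ and $G$ has a corner at $t=0$, although \eqref{eq:mr} itself still holds there. Your argument never differentiates $\mu$. It only needs $L^2$-differentiability of $n$ in time at $t$ (true for a.e.\ $t$, uniformly over sets with $|E|\le\sigma$) and existence of $\partial_t k$ at $(\mu(s,t),t)$. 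It is in fact the envelope argument needed to make the paper's proof rigorous, since $G(t)=\sup_{\sigma}\,(k(\sigma,t)-s\sigma)$.

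Two corrections.

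First, from $n\in H^1_{loc}(0,T;L^2)$ you get $|k(\sigma,t)-k(\sigma,t')|\le\sigma^{1/2}\int_{t'}^{t}\|\partial_t n\|_{L^2}\,d\tau$. That is absolute continuity, not Lipschitz continuity, though it still gives a.e.\ differentiability.

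Second, your plateau paragraph is unnecessary and, as stated, false. The Stampacchia fact gives $\partial_t n=0$ a.e.\ on the space-time set $\{n=s\}$ for a fixed level $s$, not on a plateau of $n(\cdot,t)$ at a fixed time. In the example above, $\int_{\{n\ge s\}}\partial_t n\,dx\neq\int_{\{n>s\}}\partial_t n\,dx$.

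What your argument actually uses is that, for $s>0$, $E_s$ is the unique maximizer at $\sigma=\mu(s,t)$. Maximizers $E_h$ of $k(\sigma,t+h)$ are then almost-maximizers at time $t$, so $|E_s\setminus E_h|=|E_h\setminus E_s|\to 0$. Combined with $k(\sigma,t+h)-k(\sigma,t)\le\int_{E_h}(n(t+h)-n(t))\,dx$, this yields the matching one-sided bounds. Hence $\partial_t k(\mu(s,t),t)$ exists for every $s>0$ at every time where $n$ is $L^2$-differentiable, and your caveat about a $\sigma$-dependent exceptional set of times can be dropped.
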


\begin{proof}
It follows from a theorem of Mossino and Rakotoson ((2.12), \cite{MR1986}). Here we give a complete proof.

\textbf{Proof of \eqref{eq:mr-cor}.} 
First, by the classical theory of decreasing rearrangements, the function $n^*(\sigma, t)$ is equimeasurable with $n(x,t)$. This implies that the integral of $n(x,t)$ over any of its super-level sets is exactly equal to the integral of $n^*(\sigma, t)$ over the corresponding interval $[0, \mu(s,t)]$, where $\mu(s,t)$ is the Lebesgue measure of the super-level set $\{x \in \mathbb{R}^2 \mid n(x,t) > s\}$. Therefore, we have
\begin{equation*}
    \int_{\{n(x,t)>s\}} n(x,t) \,dx = \int_0^{\mu(s,t)} n^*(\tau,t) \,d\tau.
\end{equation*}
Recalling the definition of $k(s,t)$ in \eqref{eq:k-def}, the right-hand side is exactly $k(\mu(s,t), t)$. This proves \eqref{eq:mr-cor}.

\textbf{Proof of \eqref{eq:mr}.} 
We introduce an auxiliary function $G(t)$ representing the excess mass above the threshold level $s$:
\begin{equation}\label{eq:G_def}
    G(t) = \int_{\{n(x,t)>s\}} \big( n(x,t) - s \big) \,dx.
\end{equation}
First, differentiating \eqref{eq:G_def} directly with respect to $t$ in the physical space yields:
\begin{equation}\label{eq:dt_G_1}
    \frac{d}{dt} G(t) = \int_{\{n(x,t)>s\}} \partial_t n(x,t) \,dx.
\end{equation}
Notice that the boundary terms arising from the differentiation of the domain of integration vanish, since $n(x,t) = s$ on the boundary $\partial \{n(x,t)>s\}$.

Second, we rewrite $G(t)$ using the rearrangement space identity \eqref{eq:mr-cor}:
\begin{align*}
    G(t) &= \int_{\{n(x,t)>s\}} n(x,t) \,dx - \int_{\{n(x,t)>s\}} s \,dx \\
    &= k(\mu(s,t), t) - s \mu(s,t).
\end{align*}
Now, differentiating this expression with respect to $t$ using the chain rule, we obtain:
\begin{equation}\label{eq:dt_G_2}
    \frac{d}{dt} G(t) = \frac{\partial k}{\partial \mu}(\mu(s,t), t) \frac{\partial \mu}{\partial t} + \frac{\partial k}{\partial t}(\mu(s,t), t) - s \frac{\partial \mu}{\partial t}.
\end{equation}
By the Fundamental Theorem of Calculus applied to definition \eqref{eq:k-def}, we know that $\frac{\partial k}{\partial \mu} = n^*(\mu, t)$. Substituting $\mu = \mu(s,t)$ and utilizing the critical identity \eqref{eq:s-n}, which states $n^*(\mu(s,t), t) = s$, we have
\begin{equation*}
    \frac{\partial k}{\partial \mu}(\mu(s,t), t) = s.
\end{equation*}
Plugging this back into \eqref{eq:dt_G_2}, the terms involving $\frac{\partial \mu}{\partial t}$ perfectly cancel out:
\begin{align*}
    \frac{d}{dt} G(t) &= s \frac{\partial \mu}{\partial t} + \frac{\partial k}{\partial t}(\mu(s,t), t) - s \frac{\partial \mu}{\partial t} \\
    &= \frac{\partial k}{\partial t}(\mu(s,t), t),
\end{align*}
which and \eqref{eq:dt_G_1} implies \eqref{eq:mr}.
    
\end{proof}

\begin{proof}[Proof of Proposition \ref{eq:k's equation}]

The proof is divided into three steps.

\textbf{Step I: The energy estimate.}
For $t\in (\tau,T)$, $h>0$ and $\rho\in (0, n^*(0,t))$, let
\begin{align} \label{eq:test-1}\,\, \Phi(s)=\left\{
\begin{aligned}
0,\quad  &s\leq \rho,\\
s-\rho,\quad  & \rho<s\leq \rho+h,\\
h,\quad  & s>\rho+h.
\end{aligned}
\right. \end{align}
Multiplying $ \Phi(n)$ on both sides of $\eqref{ksns}_1$, we have
\ben\label{eq:ks-energy}
\int_{\mathbb{R}^2}\frac{\partial}{\partial t}n  \Phi(n)dx&=& -\int_{\mathbb{R}^2}\nabla n  \cdot \nabla \Phi(n)dx+\int_{\mathbb{R}^2}n \nabla c  \cdot \nabla \Phi(n)dx\nonumber\\
&&+ \int_{\mathbb{R}^2} n  u\cdot \nabla \Phi(n)dx=:I_1+{\color{red}I_2}+I_3.
\een
For the first term $I_1$ of the right hand of \eqref{eq:ks-energy}, by \eqref{eq:test-1} we get
\ben\label{eq:laplace n}
&&\lim_{h\searrow 0}\frac{1}{h}\int_{\mathbb{R}^2}\nabla n  \cdot \nabla \Phi(n)dx\nonumber\\
&=& \lim_{h\searrow 0}\frac{1}{h}\left(\int_{n>\rho}|\nabla n|^2  dx-\int_{n>\rho+h}|\nabla n|^2  dx\right)\nonumber\\
&=&-\frac{\partial}{\partial \rho}\int_{n>\rho}|\nabla n|^2  dx.
\een
For the second term $I_2$  of the right hand of \eqref{eq:ks-energy}, let
\begin{align} \label{eq:test-2}\,\, \Psi(s)=\left\{
\begin{aligned}
0,\quad  &s\leq \rho,\\
\frac12(s^2-\rho^2),\quad  & \rho<s\leq \rho+h,\\
h(\rho+\frac{h}{2}),\quad  & s>\rho+h.
\end{aligned}
\right. \end{align}
Then \eqref{eq:test-1}, \eqref{eq:test-2} and integration by parts imply 
\beno
\int_{\mathbb{R}^2}n \nabla c  \cdot \nabla \Phi(n)dx=\int_{\mathbb{R}^2} \nabla c  \cdot \nabla \Psi(n)dx=\int_{\mathbb{R}^2}n  \Psi(n)dx.
\eeno
Consequently, using \eqref{eq:test-2} again we have
\ben\label{eq:n nablac}
&&\lim_{h\searrow 0}\frac{1}{h}\int_{\mathbb{R}^2}n \nabla c  \cdot \nabla \Phi(n)dx\nonumber\\
&=& \lim_{h\searrow 0}\frac{1}{2h}\int_{\rho<n\leq \rho+h} n(n^2-\rho^2)dx+\int_{n> \rho+h}n (\rho+\frac{h}{2})dx\nonumber\\
&=&\rho \int_{n> \rho}ndx\nonumber\\
&= &n^*(\mu(\rho,t),t)\int_{n> \rho}n dx\nonumber\\
&=& \frac{\partial k}{\partial s}(\mu(\rho,t),t) k(\mu(\rho,t),t),
\een
where we used \eqref{eq:s-n} and \eqref{eq:mr-cor}.\\
For the term $I_3$, there holds
\ben\label{eq:u term}
\int_{\mathbb{R}^2} n  u\cdot \nabla \Phi(n)dx=\int_{\mathbb{R}^2} u  \cdot \nabla \Psi(n)dx=0.
\een
Moreover, by \eqref{eq:mr} we have 
\ben\label{eq:partial t n}
\lim_{h\searrow 0}\frac{1}{h}\int_{\mathbb{R}^2}\frac{\partial}{\partial t}n  \Phi(n)dx=\int_{n>\rho}\partial_t ndx=\frac{\partial k}{\partial t}(\mu(\rho,t),t).
\een

Consequently, recalling \eqref{eq:ks-energy} and concluding the above computations such as \eqref{eq:laplace n}, \eqref{eq:n nablac}, \eqref{eq:u term} and \eqref{eq:partial t n},  we have 
\ben\label{eq:energy-k}
\frac{\partial k}{\partial t}(\mu(\rho,t),t)-\frac{\partial}{\partial \rho}\int_{n>\rho}|\nabla n|^2  dx= \frac{\partial k}{\partial s}(\mu(\rho,t),t) k(\mu(\rho,t),t).
\een

\textbf{Step II: The lower bound of the diffusion term.} Let
\beno
 E(\rho):= \int_{n>\rho}|\nabla n|^2  dx, \eeno
and we claim that 
\ben\label{eq:bound of mu}
  -\frac{\partial}{\partial \rho} E(\rho)\geq 4\pi \mu\left( -\frac{\partial\mu}{\partial\rho}\right)^{-1}.
\een
\textbf{Proof of \eqref{eq:bound of mu}.}  For any integrable function $f$, by  Co-area formula (for example Theorem 3.11 \cite{EG2015}) we have
\beno
\int_{\mathbb{R}^2} f(x) |\nabla n| \, dx = \int_{-\infty}^{\infty} \left( \int_{\Gamma_s} f(x) \, d\mathcal{H}^1 \right) ds,
\eeno
where $\mathcal{H}^1$ denotes the 1-dimensional Hausdorff measure (arc length) and the level set $\Gamma_s=\{x; n(x,t)=s\}$. Specifically, for integrals over the super-level set $n > \rho$ we get
\ben\label{eq:co-area}
\int_{n > \rho} g(x) \, dx = \int_{\rho}^{\infty} \left( \int_{\Gamma_s} \frac{g(x)}{|\nabla n|} \, d\mathcal{H}^1 \right) ds.
\een
Taking $g(x) = 1$ in \eqref{eq:co-area}, then we have
\beno
\mu(\rho) = \int_{\rho}^{\infty} \left( \int_{\Gamma_s} \frac{1}{|\nabla n|} \, d\mathcal{H}^1 \right) ds,
\eeno
which yields that 
\ben \label{eq:derivative of mu}
-\frac{\partial \mu}{\partial \rho} = \int_{\Gamma_\rho} \frac{1}{|\nabla n|} \, d\mathcal{H}^1.
\een
Moreover, choosing $g(x) = |\nabla n|^2$ in \eqref{eq:co-area} we have
\beno
E(\rho) = \int_{\rho}^{\infty} \left( \int_{\Gamma_s} |\nabla n| \, d\mathcal{H}^1 \right) ds,
\eeno
which implies that
\ben \label{eq:dE}
-\frac{\partial E}{\partial \rho} = -\frac{\partial}{\partial \rho}\int_{n>\rho}|\nabla n|^2 \, dx = \int_{\Gamma_\rho} |\nabla n| \, d\mathcal{H}^1.
\een

The classical isoperimetric inequality in $\mathbb{R}^2$ states that for a domain with area $A$ and perimeter $L$, $4\pi A \leq L^2$ (see, for example,  \cite{O1978}).
In our context, $A = \mu(\rho)$ and perimeter $= L(\rho)$. Therefore:
\begin{equation} \label{eq:iso}
4\pi \mu(\rho) \leq L(\rho)^2,
\end{equation}
where $L(\rho) = \mathcal{H}^1(\Gamma_\rho)$ be the length (perimeter) of the level set $\Gamma_\rho$. Then
\beno
L(\rho) = \int_{\Gamma_\rho} 1 \, d\mathcal{H}^1 = \int_{\Gamma_\rho} \sqrt{|\nabla n|} \cdot \frac{1}{\sqrt{|\nabla n|}} \, d\mathcal{H}^1.
\eeno
Applying the Cauchy-Schwarz inequality, 
 (\ref{eq:derivative of mu}) and (\ref{eq:dE}), we obtain that 
\beno \label{eq:CS}
L(\rho)^2 \leq \left( -\frac{\partial E}{\partial \rho} \right) \cdot \left( -\frac{\partial \mu}{\partial \rho} \right),
\eeno
which and \eqref{eq:iso} implies the claim \eqref{eq:bound of mu}.

\textbf{Step III: Conclusions.}
By \eqref{eq:energy-k} and \eqref{eq:bound of mu} we have 
\beno
4\pi \mu \left( -\frac{\partial\mu}{\partial\rho}\right)^{-1}\leq -\frac{\partial k}{\partial t}(\mu(\rho,t),t)+\frac{\partial k}{\partial s}(\mu(\rho,t),t) k(\mu(\rho,t),t).
\eeno
By \eqref{eq:s-n} and \eqref{eq:k-def}, this implies 
\beno
-\frac{\partial n^*}{\partial s}(s,t)=-\frac{\partial^2 k}{\partial s^2}(s,t)\leq \frac{1}{4\pi s}\left[-\frac{\partial k}{\partial t}(s,t)+\frac{\partial k}{\partial s}(s,t) k(s,t)\right]
\eeno
The proof of \eqref{eq:mono} is complete and \eqref{eq:mono-b} follows from \eqref{eq:k-def} and the property of $n^*$.
\end{proof}

\section{An improved maximum principle and proof of Proposition \ref{lem:dn1995}}

In this section, we are committed to proving
Proposition \ref{lem:dn1995}.

\begin{proof}[Proof of Proposition \ref{lem:dn1995}]

Let $w(s, t) = f(s, t) - g(s, t)$. Next we  prove that $w(s, t) \le 0$ almost everywhere in $Q_T^*$.
Subtracting the inequality for $g$ from the inequality for $f$ in condition (iii), we obtain the differential inequality for $w$:
\begin{equation} \label{eq:diff}
    \frac{\partial w}{\partial t} - 4\pi s \frac{\partial^2 w}{\partial s^2} - \left( f \frac{\partial f}{\partial s} - g \frac{\partial g}{\partial s} \right) \le 0.
\end{equation}
Define the positive part of the difference as $w_+(s, t) = \max\{0, w(s, t)\}$. Assume  that $w_+ \in H^1(0, \Lambda)$ for a.e. $t$ without loss of generality, otherwise we can use an approximation method. Multiplying the inequality by $w_+$ and integrating over the spatial domain $\Omega^* = (0, \Lambda)$, we get:
\begin{equation} \label{eq:integral}
    \frac{1}{2} \frac{d}{dt} \int_0^{\Lambda} (w_+)^2 \, ds - 4\pi \int_0^{\Lambda} s \frac{\partial^2 w}{\partial s^2} w_+ \, ds -  \int_0^{\Lambda} \left( f \frac{\partial f}{\partial s} - g \frac{\partial g}{\partial s} \right)w_+ \, ds \le 0.
\end{equation}

\noindent \textbf{Step I. The Diffusion term.}
We perform integration by parts on the second term. Note that $w_+$ is non-zero only where $w > 0$, so $\partial_s w = \partial_s w_+$ on the support of $w_+$.
\begin{align*}
    - \int_0^{\Lambda} s \frac{\partial^2 w}{\partial s^2} w_+ \, ds &= - \left[ s \frac{\partial w}{\partial s} w_+ \right]_0^{\Lambda} + \int_0^{\Lambda} \frac{\partial}{\partial s} (s w_+) \frac{\partial w}{\partial s} \, ds \\
    &= - \Lambda \frac{\partial w}{\partial s}(\Lambda, t) w_+(\Lambda, t) + \int_0^{\Lambda} \left( w_+ \frac{\partial w_+}{\partial s} + s \left(\frac{\partial w_+}{\partial s}\right)^2 \right) \, ds.
\end{align*}
From one condition of (v), $\frac{\partial f}{\partial s}(\Lambda, t) \le \frac{\partial g}{\partial s}(\Lambda, t)$, which implies $\frac{\partial w}{\partial s}(\Lambda, t) \le 0$. Since $w_+ \ge 0$, the boundary term $- \Lambda \frac{\partial w}{\partial s} w_+$ is non-negative. The other condition of (v) $f(\Lambda, t) \leq g(\Lambda, t) $ implies $- \Lambda \frac{\partial w}{\partial s} w_+=0$ if $\Lambda<\infty$.
Also, 
\beno
\int_0^{\Lambda} w_+ \frac{\partial w_+}{\partial s} \, ds = \frac{1}{2} (w_+(\Lambda, t))^2 \ge 0
\eeno 
due to the condition (iv).
Thus:
\ben\label{eq: bound of diffusion}
    - 4\pi \int_0^{\Lambda} s \frac{\partial^2 w}{\partial s^2} w_+ \, ds \ge 4\pi \int_0^{\Lambda} s \left(\frac{\partial w_+}{\partial s}\right)^2 \, ds.
\een

\noindent \textbf{Step II. The convection term.}
Note that the third term in (\ref{eq:integral})
satisfies
\beno
     \int_0^{\Lambda} w_+\left(f\frac{\partial}{\partial s}f-g\frac{\partial}{\partial s}g\right) \, ds
     &=&  \int_0^{\Lambda} w_+^2\frac{\partial}{\partial s} f+g\frac{\partial w_+}{\partial s} w_+\, ds\\
    &\leq &C_1(t) \int_0^{\Lambda} w_+^2ds+\int_0^{\Lambda} \left| f\frac{\partial w_+}{\partial s}\right| w_+\, ds
\eeno
where we used the conditions (ii), (vii) and $g\leq f$ when $w_+$ does not vanish. Using conditions (ii) and (iv) we have
\beno
|f|\leq \min\{C_1(t), C_1(t)s\}\leq C_1(t)\sqrt{s},
\eeno
which implies
\beno
\int_0^{\Lambda} \left| f\frac{\partial w_+}{\partial s}\right| w_+\, ds\leq \int_0^{\Lambda} s (\partial_s w_+)^2+ C_1(t)^2 \int_0^{\Lambda} w_+^2ds.\eeno
Hence, we have
\ben\label{eq:third term}
     \int_0^{\Lambda} w_+\left(f\frac{\partial}{\partial s}f-g\frac{\partial}{\partial s}g\right) \, ds 
    &\leq &\left(C_1(t) + C_1(t)^2  \right)\int_0^{\Lambda} w_+^2ds+\int_0^{\Lambda} s(\partial_s w_+)^2 ds\nonumber\\
\een


Combining the estimates \eqref{eq: bound of diffusion} and \eqref{eq:third term} into (\ref{eq:integral}), we have
\[
    \frac{1}{2} \frac{d}{dt} \|w_+\|_{L^2}^2 + 4\pi \int_0^{\Lambda} s (\partial_s w_+)^2 \le 2\pi \int_0^{\Lambda} s (\partial_s w_+)^2 + \left(C_1(t) + C_1(t)^2  \right) \int_0^{\Lambda} (w_+)^2.
\]
Absorbing the gradient term:
\[
    \frac{d}{dt} \|w_+(t)\|_{L^2}^2 \le 2C \|w_+(t)\|_{L^2}^2.
\]
From the condition (vi), $w(s, 0) \le 0$, so $w_+(s, 0) = 0$, implying $\|w_+(0)\|_{L^2}^2 = 0$. By Gronwall's inequality, $\|w_+(t)\|_{L^2}^2 = 0$ for all $t \in [0, T]$.
Therefore, $w_+(s, t) = 0$ a.e., which implies $f(s, t) \le g(s, t)$ on $Q_T^*$.
\end{proof}




\section{Appendix: Local well-posed result of the system \eqref{ksns}}

In this section, we establish the local existence and uniqueness of mild solutions to the system (1.1) using the Contraction Mapping Principle.

	\begin{Thm}
			\label{thm:local} Assume that $\phi\in L^\infty\left([0,\infty); \dot{W}^{1,\infty}(\mathbb{R}^{2})\right)$, $0\leq n_{in}\in L^1{\cap L^{\infty}}(\mathbb{R}^{2})$ and $u_{in}\in W^{1,2}(\mathbb{R}^{2})$. Then there exists a local strong 
			 solution $(n, u, \pi)$ to the Keller-Segel-Navier-Stokes equations \eqref{ksns} with the initial data $(n_{in},u_{in})$ .
		\end{Thm}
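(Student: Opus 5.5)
We construct the local solution by a contraction argument for the mild (Duhamel) formulation, and then upgrade it to a strong solution.

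\textbf{Integral formulation.} Let $e^{t\Delta}$ be the heat semigroup and $\mathbb{P}$ the Leray projector. Write $\nabla c=\nabla(-\Delta)^{-1}n=-\frac{1}{2\pi}\frac{x}{|x|^2}*n$. Using $\nabla\cdot u=0$, the system becomes
\begin{align*}
n(t)&=e^{t\Delta}n_{in}-\int_0^t e^{(t-s)\Delta}\nabla\cdot\big(n u+n\nabla c\big)(s)\,ds,\\
u(t)&=e^{t\Delta}u_{in}-\int_0^t e^{(t-s)\Delta}\mathbb{P}\big(\nabla\cdot(u\otimes u)-n\nabla\phi\big)(s)\,ds.
\end{align*}
We work in the space
\[
X_T=\big\{(n,u):\ n\in L^\infty(0,T;L^1\cap L^\infty),\ u\in L^\infty(0,T;H^1),\ t^{1/2}\nabla u\in L^\infty(0,T;L^2)\cap\dots\big\}.
\]
A convenient simplification is to use only $u\in L^\infty_tL^4_x$, which follows from $H^1\subset L^4$ in 2D, together with $L^4$-type smoothing.

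\textbf{Key estimates.} We need the following bounds.
\begin{itemize}
\item The kernel bound $\|\nabla e^{t\Delta}f\|_{L^p}\le Ct^{-\frac12-(\frac1q-\frac1p)}\|f\|_{L^q}$.
\item The Hardy--Littlewood--Sobolev interpolation bound $\|\nabla c\|_{L^\infty}\le C\|n\|_{L^1}^{1/2}\|n\|_{L^\infty}^{1/2}$, and similarly $\|\nabla c\|_{L^4}\le C\|n\|_{L^{4/3}}$.
\end{itemize}
These give
\[
\Big\|\int_0^te^{(t-s)\Delta}\nabla\cdot(n\nabla c)\Big\|_{L^1\cap L^\infty}\le CT^{1/2}\|n\|_{L^\infty_t(L^1\cap L^\infty)}^2 .
\]
For the drift term, $\|nu\|_{L^q}\le\|n\|_{L^{r}}\|u\|_{L^4}$. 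Taking $q=4$ for the $L^\infty$ target and $q=1$ for the $L^1$ target makes the singularity $(t-s)^{-1/2-1/4}$, which is integrable. This yields a factor $T^{1/4}$.

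For $u$ in $H^1$, the forcing satisfies $\|n\nabla\phi\|_{L^2}\le\|\nabla\phi\|_{L^\infty}\|n\|_{L^2}$, and its Duhamel term belongs to $C_tH^1$ with a factor $T^{1/2}$. The nonlinearity $\nabla\cdot(u\otimes u)$ is handled by the standard 2D Navier--Stokes $H^1$ theory: one estimates $\|u\otimes u\|_{L^2}\le\|u\|_{L^4}^2\le C\|u\|_{H^1}^2$ and gains one derivative with a factor $(t-s)^{-1/2}$, which gives a factor $T^{1/2}$.

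\textbf{Contraction and regularity.} The map is Lipschitz on a ball of radius $2(\|n_{in}\|_{L^1\cap L^\infty}+\|u_{in}\|_{H^1})$ with constant $C T^{1/4}(1+R)$. Choosing $T$ small gives a unique fixed point. We then need three further facts.
\begin{itemize}
\item Nonnegativity of $n$: test the equation with $n_-$.
\item Conservation of mass.
\item Weak continuity in $L^1$: this follows from the Duhamel formula.
\end{itemize}
Parabolic smoothing (bootstrapping in time-weighted spaces) makes $(n,u)$ smooth for $t>0$. Hence the mild solution is a strong solution in the sense (i)--(iii); the symmetrized form in (i) comes from the antisymmetry of $\frac{x-y}{|x-y|^2}$.

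We also record a blow-up criterion: the existence time depends only on $\|n_{in}\|_{L^1\cap L^\infty}+\|u_{in}\|_{H^1}$. Hence the solution continues as long as $n\in L^\infty_t(L^1\cap L^\infty)$ and $u\in L^\infty_tH^1$; this is the remark invoked in Section 2.

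\textbf{Main obstacle.} The hardest step is closing the $H^1$ estimate for $u$ through the convective term without losing derivatives. If the semigroup route is awkward, we will instead use energy estimates on the vorticity: apply the Ladyzhenskaya inequality to $\omega$, and use a Galerkin or fixed-point argument only for $n$ with $u$ frozen.
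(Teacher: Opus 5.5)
Your overall strategy (Duhamel formulation, contraction in $L^\infty_t(L^1\cap L^\infty)\times L^\infty_tH^1$, HLS-type bounds on $\nabla c$, $H^1\subset L^4$ for the drift term, then positivity and bootstrapping) is the same as the paper's. Your estimates for the $n$-equation and for the forcing $n\nabla\phi$ are fine. The gap is the one you flag yourself.

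Your bound $\|e^{(t-s)\Delta}\mathbb{P}\nabla\cdot(u\otimes u)\|_{L^2}\le C(t-s)^{-1/2}\|u\|_{L^4}^2$ controls only the $L^2$ part of the $H^1$ norm. For the $\dot H^1$ part, keeping $u\otimes u\in L^2$ as the input puts two derivatives on the heat kernel. That costs a factor $(t-s)^{-1}$, which is not integrable in $s$. So the contraction in $L^\infty_tH^1$ does not close as written, and the claimed factor $T^{1/2}$ is unjustified. Your vorticity-energy fallback is only sketched, and it would need its own iteration or compactness scheme coupling the frozen-$u$ problem for $n$ with the fluid equation.

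The fix, which is what the paper does, is to let the divergence fall on the product:
\begin{itemize}
\item Write $\mathbb{P}\nabla\cdot(u\otimes u)=\mathbb{P}(u\cdot\nabla u)$, and bound $\|u\cdot\nabla u\|_{L^{3/2}}\le\|u\|_{L^6}\|\nabla u\|_{L^2}\le C\|u\|_{H^1}^2$.
\item Since $\mathbb{P}$ is bounded on $L^{3/2}$ and commutes with $e^{t\Delta}$, you get $\|\nabla e^{(t-s)\Delta}\mathbb{P}(u\cdot\nabla u)\|_{L^2}\le C(t-s)^{-2/3}\|u\cdot\nabla u\|_{L^{3/2}}$.
\item This integrates to $CT^{1/3}\sup_t\|u\|_{H^1}^2$. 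Equivalently, use $L^{4/3}$ with $\|u\|_{L^4}\|\nabla u\|_{L^2}$ to get a factor $(t-s)^{-3/4}$.
\item The Lipschitz estimate follows from $u_1\cdot\nabla u_1-u_2\cdot\nabla u_2=(u_1-u_2)\cdot\nabla u_1+u_2\cdot\nabla(u_1-u_2)$.
\end{itemize}
With this, no time-weighted norm on $\nabla u$ is needed.

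A minor correction: the existence time and your Lipschitz constant also depend on $\|\nabla\phi\|_{L^\infty_tL^\infty_x}$, as recorded in Remark~\ref{6.2}. This is harmless for the continuation argument, since $\phi\in L^\infty([0,\infty);\dot W^{1,\infty})$ gives a uniform bound on every time window.
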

\begin{proof}
Let $P$ denote the Leray projection operator onto the divergence-free vector fields in $L^2(\mathbb{R}^2)$. The system (1.1) can be rewritten as the following integral equations:
\begin{equation}\label{eq:mild_form}
\left\{
\begin{aligned}
n(t) &= e^{t\Delta} n_{in} - \int_0^t e^{(t-s)\Delta} \nabla \cdot (n u + n \nabla c)(s) \, ds, \\
u(t) &= e^{t\Delta} u_{in} - \int_0^t e^{(t-s)\Delta} P \left( u \cdot \nabla u + n \nabla \phi \right)(s) \, ds,
\end{aligned}
\right.
\end{equation}
where $c(t) = (-\Delta)^{-1} n(t) = -\frac{1}{2\pi} \ln|\cdot| * n(t)$.

\textbf{Step I: Function Spaces.}
We seek solutions in a Banach space that captures the critical scaling of the Keller-Segel system while maintaining the regularity required for the Navier-Stokes equations. For $T > 0$,  define the space $\mathcal{X}_T$ as:
\[
\mathcal{X}_T := \left\{ (n, u) \;\middle|\; 
\begin{aligned}
&n \in L^\infty(0, T; L^1{\cap L^{\infty}}(\mathbb{R}^2)),\\
&u \in L^\infty(0, T; H^1(\mathbb{R}^2)), 
\end{aligned} 
\right\}
\]
where the norm on $\mathcal{X}_T$ is defined by
\[
\|(n, u)\|_{\mathcal{X}_T} := \sup_{0<t<T}  \|n(t)\|_{L^1\cap L^{\infty}}  + \sup_{0<t<T} \|u(t)\|_{H^1}.
\]
Define a closed ball in $\mathcal{X}_T$ centered at the origin with radius $R$ with $T\leq T_1$:
\[
B_{R}(T) := \left\{ (n, u) \in \mathcal{X}_T \;\middle|\; \|(n, u)\|_{\mathcal{X}_T} \le R 
\right\},
\]
where  $R$ is chosen sufficiently large, e.g., $R > 2(\|n_{in}\|_{L^1\cap L^{\infty}} + \|u_{in}\|_{H^1})$.

{\bf Step II: Linear estimates and contraction mapping.}
Define the map\\ $\Phi(n, u) = (\Phi_1(n, u), \Phi_2(n, u))$ corresponding to the right-hand side of \eqref{eq:mild_form}. We aim to show that for sufficiently small $T$, $\Phi$ maps $B_{R}(T)$ into itself and is a contraction.

\textbf{ II.1: Estimates for the Cell Density $n$.}
Recall the $L^p$-$L^q$ smoothing estimates for the heat semigroup $e^{t\Delta}$ in $\mathbb{R}^2$ (see, for example,  Proposition A.16 in \cite{BV2022}):
\ben\label{eq:semigroup}
\|\nabla^k e^{t\Delta} f\|_{L^q} \le C t^{\frac{1}{q}-\frac{1}{p} - \frac{k}{2}} \|f\|_{L^p}, \quad 1 \le p \le q \le \infty.
\een
By the Hardy-Littlewood-Sobolev (HLS) inequality, since $n \in L^1\cap L^{\infty}\subset L^{4/3}$, 
\ben\label{eq:HLS}
\|\nabla c\|_{L^{4}}= \|\nabla (-\Delta)^{-1} n\|_{L^{4}} \leq 
C \|n\|_{L^{4/3}}\leq C\|n\|_{L^1\cap L^{\infty}}.
\een
For $\Phi_1$, by \eqref{eq:mild_form} and \eqref{eq:semigroup} we estimate the $L^{4/3}$ norm with the time weight $t^{1/4}$:
\begin{align*}
&\|\Phi_1(n, u)(t)\|_{L^1\cap L^{\infty}} \le \|e^{t\Delta} n_{in}\|_{L^1\cap L^{\infty}} + \int_0^t \left\| e^{(t-s)\Delta} \nabla \cdot (nu + n\nabla c)(s) \right\|_{L^1\cap L^{\infty}} ds \\
\le& \| n_{in}\|_{L^1\cap L^{\infty}} + C \int_0^t ((t-s)^{-1/2}+(t-s)^{-3/4}) \left( \|nu\|_{L^1\cap L^4} + \|n\nabla c\|_{L^1\cap L^4} \right) ds.
\end{align*}
Using  H\"{o}lder's inequality, \eqref{eq:HLS} and  Gagliardo-Nirenberg interpolation inequality
 we have
\[
\|n \nabla c\|_{L^1\cap L^4} \le \|n\|_{L^1\cap L^{\infty}} \|\nabla c\|_{L^4} \le C\|n\|_{L^1\cap L^{\infty}}^2,
\]
\[
\|n u\|_{L^1\cap L^4} \le C \|n\|_{L^1\cap L^{\infty}} \|u\|_{L^4} \le C \|n\|_{L^1\cap L^{\infty}}\|u\|_{H^1}.
\]
Then,
\begin{align*}
\|\Phi_1(t)\|_{L^1\cap L^{\infty}} &\le \| n_{in}\|_{L^1\cap L^{\infty}}+ C \int_0^t ((t-s)^{-1/2}+(t-s)^{-3/4})  \|(n,u)\|_{\mathcal{X}_T}^2 \, ds \\
&\leq \| n_{in}\|_{L^1\cap L^{\infty}}+ C (t^{1/2}+t^{1/4})  \|(n,u)\|_{\mathcal{X}_T}^2.
\end{align*}
Thus, for $t \in (0, T)$,  we have
\[
\sup_{0<t<T} \|\Phi_1(t)\|_{L^1\cap L^{\infty}} \le  \| n_{in}\|_{L^1\cap L^{\infty}}+ C(T^{1/2}+ T^{1/4})R^2\le  \| n_{in}\|_{L^1\cap L^{\infty}}+R/4,
\]
if $T$ is sufficiently small.

\textbf{II.2: Estimates for the Velocity $u$.}
For $\Phi_2$, we need to bound the $H^1$ norm.
\[
\|\Phi_2(t)\|_{H^1} \le \|e^{t\Delta} u_{in}\|_{H^1} + \int_0^t \|e^{(t-s)\Delta} P \nabla \cdot (u \otimes u)\|_{H^1} ds + \int_0^t \|e^{(t-s)\Delta} P (n \nabla \phi)\|_{H^1} ds.
\]
For the convection term, 
 using \eqref{eq:semigroup} again we have
\begin{align*}
\int_0^t \|\nabla e^{(t-s)\Delta} (u \otimes u)\|_{L^2} ds &\le C \int_0^t (t-s)^{-1/2} \|u\|_{L^4}^2 ds \\
&\le C T^{1/2} \sup_t \|u(t)\|_{H^1}^2,
\end{align*}
and
\begin{align*}
\int_0^t \|\nabla^2 e^{(t-s)\Delta} (u \otimes u)\|_{L^2} ds &\le C \int_0^t (t-s)^{-2/3} \|u\nabla u\|_{L^\frac32} ds \\
&\le C T^{1/3} \sup_t \|u(t)\|_{H^1}^2,
\end{align*}

For the coupling term $n \nabla \phi$, since $\phi \in \dot{W}^{1,\infty}$:
\begin{align*}
\int_0^t \|e^{(t-s)\Delta} (n \nabla \phi)\|_{H^1} ds &\leq \int_0^t \| e^{(t-s)\Delta} (n \nabla \phi)\|_{L^2} ds+\int_0^t \|\nabla e^{(t-s)\Delta} (n \nabla \phi)\|_{L^2} ds \\
&\le C \int_0^t (1+(t-s)^{-1/2}) \|n\|_{L^2} \|\nabla \phi\|_{L^\infty} ds\\
&\le C(t +t^{1/2})\sup_t\|n\|_{L^1\cap L^{\infty}} \|\nabla \phi\|_{L^\infty}.
\end{align*}
Now we have
\[
\sup_{0<t<T} \|\Phi_2(t)\|_{H^1} \le  \| u_{in}\|_{H^1}+ C(T+ T^{1/3})(R^2+R\|\nabla \phi\|_{L^{\infty}((0,T),L^\infty)})\le  \| u_{in}\|_{H^1}+ R/4,
\] and 
\[\|\Phi(n,u)\|_{\mathcal{X}_T}\le \| n_{in}\|_{L^1\cap L^{\infty}}+\| u_{in}\|_{H^1}+R/2<R,\]
if $T$ 
is sufficiently small.

\textbf{Step III: Conclusion.}
Choose $T\in(0,1)$ small enough. Then $\Phi$ maps $B_{R}(T)$ into itself. Similar estimates on the difference $(\Phi(n_1, u_1) - \Phi(n_2, u_2))$ show that $\Phi$ is a contraction map. 
Thus, by the Banach Fixed Point Theorem, there exists a unique mild solution to (\ref{ksns}) on $[0, T)$. Moreover, $n\geq 0$ due to the maximum principle. Standard bootstrapping arguments allow us to upgrade this mild solution to a strong solution for $t > 0$.
\end{proof}
\begin{Rem}\label{6.2}
In the proof, $T\in(0,1)$ depends only on $ \| n_{in}\|_{L^1\cap L^{\infty}}$, $\| u_{in}\|_{H^1}$ and 
$\|\nabla \phi\|_{L^{\infty}((0,1),L^\infty)}$. Thus if the solution blows up in finite time $T^*$ then \\
$n \not\in L^\infty(0, T^*; L^1{\cap L^{\infty}}(\mathbb{R}^2))$ or $u \not\in L^\infty(0, T^*; H^1(\mathbb{R}^2))$.
\end{Rem}

\medskip
\noindent {\bf Acknowledgment:}\,
 W. Wang was supported by National Key R\&D Program of China (No. 2023YFA1009200) and NSFC under grant 12471219. Z. Zhang is supported by NSF of China under
Grant No. 12288101.

\medskip
\noindent\textbf{Data Availability Statement:}
Data sharing is not applicable to this article as no data sets were generated or analysed during the current study.

\noindent\textbf{Conflict of Interest:}
The authors declare that they have no conflict of interest.

	%

\end{document}